\documentclass[11pt,leqno]{article}
\usepackage[english]{babel}
\usepackage{amsmath,amssymb,amscd,enumerate,rotate,multirow}
\usepackage{bm,latexsym,mathrsfs}
\usepackage{graphicx}
\usepackage[usenames]{color}
\usepackage{subfigure}
\usepackage{subfig}
\usepackage{pifont}
\usepackage{graphicx,color,psfrag,pgfplots,tikz,xcolor}
\usepackage{amsthm}
\usepackage{bbm}
\usepackage{verbatim}
\usepackage{xspace}
\usepackage{marvosym}

\usepackage{mathtools}
 \mathtoolsset{showonlyrefs = true}

\newcommand{\correz}[1]{{\color{black} #1}}

\usepackage[authoryear]{natbib}

\newtheorem{proposition}{Proposition}

\hoffset-0.5in
\voffset-0.5in
\setlength{\textheight}{22.5cm}
\setlength{\textwidth}{15.5cm}

\pagestyle{myheadings}
\markright{$\varepsilon$-NormCRM mixtures}

\newcommand{\be}{\begin{equation}}
\newcommand{\ee}{\end{equation}}

\newcommand{\iid}{\stackrel{\mathrm{iid}}{\sim}}
\newcommand{\ind}{\stackrel{\mathrm{ind}}{\sim}}
\newcommand{\e}{\mathrm{e}}
\newcommand{\calB}{\mathcal{B}}

\newcommand{\calL}{\mathcal{L}}
\newcommand{\calN}{\mathcal{N}}

\newcommand{\calX}{\mathcal{X}}

\newcommand{\Prob}{\mathbb{P}}
\newcommand{\E}{\mathbb{E}}
\newcommand{\ff}{\ _2F_1 }
\newcommand{\LL}{\mathcal L}
\newcommand{\rt}{\rightarrow}

\newcommand{\Rea}{{\mathbb{R}}}

\newcommand{\uno}{\mathbb{I}}

\newcommand{\Neps}{N_\varepsilon}
\newcommand{\Peps}{P_\varepsilon}
\newcommand{\Teps}{T_\varepsilon}

\DeclareMathOperator{\Var}{Var}
\DeclareMathOperator{\Cov}{Cov}


\begin{document}

\title{ A priori truncation method for posterior sampling \\ from homogeneous 
normalized completely random measure mixture models}
\author{Raffaele Argiento\thanks{CNR-IMATI \qquad \qquad \ \ \ \Letter \ raffaele@mi.imati.cnr.it} 
\and Ilaria Bianchini\thanks{Politecnico di Milano \ \ \ \Letter \   
\{ilaria.bianchini\}\{alessandra.guglielmi\}@polimi.it} 
\and Alessandra Guglielmi\footnotemark[2]}

\date{\vspace{-5ex}}

\maketitle
\begin{abstract}
\noindent 
This paper adopts a Bayesian nonparametric mixture model where the mixing distribution 
belongs to the wide class of normalized homogeneous completely random measures.
We propose a truncation method for the mixing distribution by discarding the weights of the unnormalized
measure smaller than a threshold. We prove convergence in law 
of  our approximation, provide some theoretical properties and characterize
its  posterior distribution so that a blocked Gibbs sampler is devised.

The versatility of the approximation is illustrated  by two different applications. In the first  
the normalized Bessel random measure, encompassing the Dirichlet process, is introduced;
goodness of fit indexes show its  good performances as mixing measure for
density estimation.
The second describes how to incorporate covariates in the support of the
normalized measure, leading to a linear dependent model for regression and 
clustering.

\vskip6pt
{{\bf \noindent Keywords}: Bayesian nonparametric mixture models
\textbullet~ normalized  completely random measure
\textbullet~ blocked Gibbs sampler
\textbullet~ finite dimensional approximation
\textbullet~ a priori truncation method}
\end{abstract}

\section{Introduction}
\label{sec:1}
One of the livelier topic in Bayesian Nonparametrics concerns mixtures of parametric densities where the mixing measure is an almost surely discrete random probability measure. 
The basic model is what is known now as Dirichlet process mixture model, appeared first in \cite{Lo84}, 
where the mixing measure is indeed the Dirichlet process. Dating back to \cite{IshJam01} and \cite{lij_etal05NIG}, many alternative mixing measures have been proposed; the former paper replaced the Dirichlet process with  stick-breaking random probability measures, while the latter focused on normalized completely random measures.  

These hierarchical mixtures
play a pivotal role in modern Bayesian Nonparametrics, since their potentialities range within many applications. 
Indeed, they can easily be exploited in very different contexts: for instance, graphical models, topic modeling or biological applications.
Their popularity is mainly due to the high flexibility in density estimation problems as well as in clustering, which is naturally embedded in the model. 

Often the Dirichlet Process prior is employed as mixing measure because of its mathematical 
and computational tractability: however, in 
some statistical applications, clustering induced by the Dirichlet process may be restrictive. 
In fact, it is well-know that the latter allocates observations to clusters with probabilities depending only on the 
cluster sizes, leading to the 
"the rich gets richer" behavior. 
Within some classes of more general processes, as, for instance,  stick-breaking and normalized processes, 
the probability of allocating an observation to a specific cluster depends also on extra parameters, 
as well as on the  number of groups and on the cluster's size. 
We refer to \cite{ArgGug15} for a recent review of 
state of art on Bayesian nonparametric mixture models and clustering.

Since, when dealing with nonparametric mixtures, the posterior inference involves 
an infinite-dimensional parameter, this may lead to computational issues; this limit prevents applied statisticians 
from exploiting models beyond Dirichlet process mixtures when dealing with modern real-life applications.
However, there is a recent and lively literature  focusing mainly on two different classes of MCMC algorithms, 
namely \textit{marginal} and \textit{conditional} Gibbs samplers.
The former integrate out the
infinite dimensional parameter (i.e. the random probability),
resorting to generalized Polya urn schemes;
see \cite{FavTeh13} or \cite{LomFav14}. 
The latter include the nonparametric mixing measure in the state space of the Gibbs sampler, 
updating it as a component of the algorithm;
this group includes the slice sampler \cite[see][]{GriWal11}.
Among conditional algorithms there are truncation methods, where the infinite parameter (i.e. the mixing measure) 
is approximated by truncation of the infinite sums defining the process, either a posteriori 
\citep{Arg_etal10,bar_etal13} or a priori \citep{peps,griffin2013adaptive}.

In this work we introduce an almost surely finite dimensional class of random probability measures that approximates the 
wide family of homogeneous normalized completely random measures \citep{Regazzini_etal2003,Kingman75}; we use this class as the building block in mixture models 
and provide a simple but general algorithm to perform posterior inference.
Our approximation is based on the constructive definition of the weights of the completely random measure 
as the points of a Poisson process on $\mathbb{R}^+$. In particular, we consider only points larger than a threshold $\varepsilon$, controlling the 
degree of approximation. 
The construction given here generalizes \cite{peps} where the particular class of normalized generalized gamma processes
was considered.  
Conditionally on $\varepsilon$,
our process is finite dimensional either  a priori and a posteriori.

As detailed later, the two main ingredients to build a normalized completely random measure are  $\rho(s)$, $s>0$, 
the intensity of the Poisson process determining the weights of the measure  on the one hand, and 
the so-called centering measure $P_0(\cdot)$, characterizing the locations of the measure, on the other.
Here we illustrate two applications. In the first, a new choice for $\rho$ is proposed: the Bessel 
intensity function, that, up to our knowledge, has never been applied in a statistical framework, but 
in finance \citep[see][for instance]{barndorff00}. On the other hand, we fix the centering measure
$P_0$ to be the normal inverse-gamma, a conjugate choice when the kernel is  Gaussian. 
We call this new process normalized Bessel random measure.
In the second application, we set $\rho$ to be the well-known generalized gamma intensity and consider 
a centering measure $P_{0\textbf{x}}$ depending on 
on a set of covariates $\textbf{x}$, 
yielding a linear dependent normalized completely random measure.
For a recent survey on dependent nonparametric processes 
in the Statistics and Machine Learning literature see \cite{FotiWill2015}.

In this paper, since 
the main objective is the approximation of the nonparametric process arisen from the normalization of  
completely random measures, we fix $\varepsilon$ to a small value. 
However, it is worth mentioning that it is possible to elicit a prior for $\varepsilon$,
but the computational cost might greatly increase for some $\rho$.

The main achievements of this works  can be summarized as follows: first we show that, for 
$\varepsilon$ going to zero,
the  finite dimensional $\varepsilon$-approximation of homogeneous normalized completely random measures  
converges to its 
infinite dimensional counterpart, and compute its prior moments (Sections 3 and 4). 
Then we  provide a Gibbs sampler for the $\varepsilon$ -approximation hierarchical mixture model (Section 5). Section 6.1 is devoted to the introduction of the normalized 
normalized Bessel random measure, 
and some of its properties;
on the other hand, Section 6.2 discusses an application of the $\varepsilon$-Bessel mixture models to both simulated and real data. 
Section 7 defines the linear dependent $\varepsilon$-NGGs, and consider
 linear dependent $\varepsilon$-NGG mixtures to fit the AIS data set. 
To complete the set-up of the paper, Section \ref{sec:homogen} is devoted to a summary of basic notions about homogeneous NRMIs, 
and Section \ref{sec:last} contains a conclusive discussion.

\section{Preliminaries on homogeneous normalized completely random measures}
\label{sec:homogen}
Let us briefly recall the definition of a homogeneous normalized completely random measure. 
Let $\Theta\subset\Rea^m$ for some positive integer $m$. 
A random measure $\mu$ on  $\Theta$ is  completely random  if for any finite sequence $B_1,B_2,\dots,B_k$ of disjoint sets in $\mathcal{B}(\Theta)$,
$\mu(B_1),  \mu(B_2),\dots,\mu(B_k)$ are independent. A  purely atomic completely random measure is defined \citep[see][Section 8.2]{Kingman93} by 
 $ \mu(\cdot)=\sum_{j\ge1}J_j \delta_{\tau_j}(\cdot)$, where the  $\{(J_j,\tau_j)\}_{j\ge1}$ are the points of a Poisson process on $\Rea^{+}\times \Theta$.
We denote by $\nu(ds,d\tau)$ the intensity of the mean measure of such a Poisson process. A completely random measure is homogeneous if $\nu(ds,d\tau)=\rho(s)ds \kappa P_0(d\tau)$, where $\rho(s)$ is the density of a non-negative measure on $\Rea^{+}$, while $\kappa P_0$ is a
finite measure on $\Theta$ with total mass $\kappa>0$. If  $\mu $ is homogeneous,  the support points,  that is $\{\tau_j\}$,  and the jumps of $\mu$,
 $\{J_j\}$, are independent, and the $\tau_j$'s are independent identically distributed (iid) random variables from $P_0$, while $\{J_j\}$ are the  points of a Poisson
process on $\Rea^+$ with mean intensity $\rho$. Furthermore, we assume that $\rho$ satisfies the following regularity conditions:
\begin{equation}
  \begin{array}[h!]{ccc}
  \displaystyle \int_{0}^{+\infty} \min\{1,s\} \rho(s)ds<\infty &
   \text{and} &
   \displaystyle  \int_{0}^{+\infty}  \rho(s)ds=+\infty,
  \end{array}
  \label{eq:regolarita}
\end{equation}
so that, if $T:=\mu(\Theta)=\sum_{j\ge 1}^{}J_j$, $\Prob(0<T<+\infty)=1$. 
Recall that the distribution of $T$ is uniquely determined by its Laplace transform, given by:
\begin{equation}
\label{eq:LaplaceTransf}
\E(\e^{-\lambda T})= \exp{\{ -\kappa \int_0^{+\infty}(1-\e^{-\lambda s}) \rho(s)ds \}},\quad \lambda\ge 0.
\end{equation}
Therefore, a random probability measure (r.p.m.) $P$ can be defined through normalization of $\mu$:
\begin{equation}
  P:=\frac{\mu}{\mu (\Theta)}=\sum_{j=1}^{+\infty}
   \frac{J_j}{T}\  \delta_{\tau_j}=\sum_{j=1}^{+\infty}P_j\delta_{\tau_j}\ .
  \label{eq:NRMI}
\end{equation}
We refer to $P$ in \eqref{eq:NRMI} as a (homogeneous) normalized completely random measure with parameter $(\rho, \kappa P_0)$. As an alternative notation, following 
\cite{jamesetal09}, $P$ is referred as a homogeneous normalized measure with independent increments. 
 The definition of normalized completely random measures appeared in 
\cite{Regazzini_etal2003} first. An alternative construction of normalized
completely random measures can be given in terms of 
Poisson-Kingman models as in  \cite{Pitman03}.

\section{$\varepsilon$-approximation of normalized completely random measures}
\label{sec:epsHNRMI}
The goal of this section is the definition of a finite dimensional random
probability measure that is an approximation of a general normalized
completely random measure 
with Levy's intensity given by $\nu(ds, d\tau)=\rho(ds)\kappa P_0(d\tau)$, introduced above. 

First of all, by the Restriction Theorem for Poisson processes,  
for any  $\varepsilon >0$, all the jumps $\{J_j\}$ of $\mu$ larger than a threshold 
$\varepsilon$ are still a Poisson process, with mean intensity $\gamma_\varepsilon(s):=\kappa \rho(s)\uno_{(\varepsilon,+\infty)}(s)$. 
Moreover, the total number of these points is Poisson distributed, i.e. $N_{\varepsilon}\sim \mathcal{P}_0(\Lambda_\varepsilon)$ where
\begin{equation*}
  \Lambda_\varepsilon:=\gamma_{\varepsilon}(\mathbb{R}^+)=\kappa\int_{\varepsilon}^{+\infty} \rho(s)ds. 
 \end{equation*}
Since $ \Lambda_\varepsilon<+\infty$ for any $\varepsilon>0$ thanks to the regularity conditions \eqref{eq:regolarita}, $N_{\varepsilon}$ is almost surely finite. 
In addition, conditionally to $N_\varepsilon$, the points
$\{J_1,\dots,J_{N_\varepsilon}\}$ are iid from the density 
\begin{equation}
\label{eq:rhoeps}
  \rho_\varepsilon(s)=\dfrac{\gamma_\varepsilon(s)}{\Lambda_\varepsilon} = \dfrac{\kappa \rho(s)}{\Lambda_\varepsilon}\uno_{(\varepsilon,+\infty)}(s),
\end{equation}
thanks to the relationship between Poisson and Bernoulli processes; see, for instance, \cite{Kingman93}, Section~2.4.
However, in this case, while $\Prob(\sum_{j=1}^{N_\varepsilon} J_j <\infty)=1$, the condition on the right hand side of 
\eqref{eq:regolarita} is not satisfied, so that 
$\Prob(\sum_{j=1}^{N_\varepsilon} J_j=0)>0$, or, in other terms, $\Prob(N_\varepsilon=0)>0$ for any $\varepsilon>0$. 
For this reason we consider $\Neps+1$ iid points  $\{J_0,J_1,\dots,J_{N_\varepsilon}\}$ from  $\rho_\varepsilon$ and define
the completely random measure  $\mu_\varepsilon(\cdot)=\sum_{j=0}^{N_\varepsilon} J_j\delta_{\tau_j}(\cdot)$, as well as  its normalized counterpart:
\begin{equation}
P_\varepsilon(\cdot)=\sum_{j=0}^{\Neps}P_{j}\delta_{\tau_j}(\cdot) = \sum_{j=0}^{\Neps}\frac{J_j}{T_\varepsilon}\delta_{\tau_j}(\cdot),
\label{eq:Pvarepsilon}
\end{equation}
where $T_\varepsilon=\sum_{j=0}^{\Neps} J_j$,  $\tau_j\iid P_0$, $\{ \tau_j\}$ and  $\{ J_j\}$ independent.
We denote $\Peps$ in \eqref{eq:Pvarepsilon} by $\varepsilon$-NormCRM and write 
$P_\varepsilon \sim \varepsilon-NormCRM(\rho, \kappa P_0)$. When $\rho_\varepsilon(s)=1/(\omega^\sigma\Gamma(-\sigma,\omega\varepsilon) )
  s^{-\sigma-1}\e^{-\omega s}$, $s>\varepsilon$, $\Peps$ is the $\varepsilon$-NGG process introduced in \cite{peps}, with parameter $(\sigma,\kappa,P_0)$,  $0\leq \sigma\leq 1$, $\kappa\geq 0$.

Both the infinite and finite dimensional processes defined in \eqref{eq:NRMI} and  \eqref{eq:Pvarepsilon}, respectively, 
belong to the wide class of species sampling models, deeply investigated 
in \cite{Pitman96}, and we use some of the results there to derive ours. 
Let $\left(\theta_1,\dots,\theta_n\right)$ be a sample from  
\eqref{eq:NRMI} or \eqref{eq:Pvarepsilon} (or more generally, from a species
sampling model); 
since it is a sample from a discrete  probability,
it induces a random partition ${\bm p}_n:=\{C_1,\dots,C_k\}$ on the set  $\mathbb{N}_n:=\{1,\dots,n\}$ where  $C_j = \{i : \theta_i = \theta^*_j\} $ for $j=1,\dots,k$.  
If $\#C_i=n_i$ for $1\le i \le k$, the marginal law of $\left(\theta_1,\dots,\theta_n\right)$ has unique characterization: 
$$\LL({\bm p}_n,\theta^*_1,\dots,\theta^*_k)=p(n_1,\dots,n_k) \prod_{j=1}^{k}\LL(\theta^*_j),$$ 
where $p$ is the exchangeable partition probability function (eppf) associated to the random probability. 
The eppf $p$ is a probability law on the set of the partitions of $\mathbb{N}_n$.
The following proposition provides an expression for the eppf of a general  $\varepsilon$-NormCRM.
\begin{proposition}
Let  $(n_1,\dots,n_k)$ be a vector of positive integers such that $\sum_{i=1}^kn_i=n$. Then, the eppf associated with a $\Peps\sim\varepsilon$-NormCRM$(\rho,\kappa P_0)$ is 
\begin{equation}
\begin{split}
p_{\varepsilon}(n_1,\dots,n_k) & =  
\int_{0}^{+\infty}\left[\dfrac{u^{n-1}}{\Gamma(n)}\dfrac{(k+\Lambda_{\varepsilon,u})}{\Lambda_{\varepsilon}}e^{\left(\Lambda_{\varepsilon,u}-\Lambda_\varepsilon\right)}
\prod_{i=1}^k \int_{\varepsilon}^{+\infty} \kappa s^{n_i}\e^{-us}\rho(s)ds\right]du  \end{split}
    \label{eq:eppf_expr}
\end{equation}
where
\begin{equation}
\label{eq:lambda_epsu}
  \Lambda_{\varepsilon,u}:=\kappa \int_{\varepsilon}^{+\infty}\e^{-us}\rho(s)ds, \quad u\ge 0.
  \end{equation}
\label{prop:eppfPeps}
\end{proposition}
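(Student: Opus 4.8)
The plan is to combine the species-sampling description of the eppf with the classical Gamma-integral device that removes the random normalizing constant $\Teps$, and then to evaluate the resulting expectation by first conditioning on $\Neps$ and afterwards summing the Poisson series in $\Neps$. Since $\Peps=\sum_{j=0}^{\Neps}P_j\delta_{\tau_j}$ is a species sampling model, the probability that a sample of size $n$ is partitioned into $k$ blocks of sizes $n_1,\dots,n_k$, allocated to distinct atoms, is
\[
p_{\varepsilon}(n_1,\dots,n_k)=\E\!\left[\sum_{(j_1,\dots,j_k)}^{\ne}\ \prod_{i=1}^{k}P_{j_i}^{\,n_i}\right],
\]
where the inner sum is over ordered $k$-tuples of \emph{distinct} indices in $\{0,1,\dots,\Neps\}$. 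Because $P_j=J_j/\Teps$, each summand equals $\Teps^{-n}\prod_{i}J_{j_i}^{\,n_i}$, so the only obstruction to factorizing over the iid jumps is the shared denominator $\Teps^{-n}$.

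To linearize it I would use $\Teps^{-n}=\Gamma(n)^{-1}\int_0^{+\infty}u^{n-1}\e^{-u\Teps}\,du$ and interchange the $u$-integral with the expectation, which is legitimate by Tonelli since all factors are nonnegative. Writing $\e^{-u\Teps}=\prod_{j=0}^{\Neps}\e^{-uJ_j}$ and conditioning on $\Neps=N$, the $N+1$ jumps are iid with density $\rho_\varepsilon$ in \eqref{eq:rhoeps}, so by exchangeability each of the $(N+1)!/(N+1-k)!$ ordered distinct $k$-tuples has the same conditional expectation: every selected index contributes $\E[J^{n_i}\e^{-uJ}]=\Lambda_\varepsilon^{-1}\int_{\varepsilon}^{+\infty}\kappa s^{n_i}\e^{-us}\rho(s)\,ds$, while each of the remaining $N+1-k$ indices contributes $\E[\e^{-uJ}]=\Lambda_{\varepsilon,u}/\Lambda_\varepsilon$, with $\Lambda_{\varepsilon,u}$ as in \eqref{eq:lambda_epsu}.

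The remaining, and most delicate, step is the Poisson sum over $N\ge k-1$ against the weights $\e^{-\Lambda_\varepsilon}\Lambda_\varepsilon^{N}/N!$. After the substitution $M=N+1-k$ the falling factorial cancels the factorials in the Poisson weights, and the summand reduces to a constant multiple of $(M+k)\,\Lambda_{\varepsilon,u}^{M}/M!$; splitting this as $M/M!+k/M!$ and summing the two exponential series produces the affine factor $(k+\Lambda_{\varepsilon,u})\e^{\Lambda_{\varepsilon,u}}$. Collecting the surviving power $\Lambda_\varepsilon^{-1}$ and restoring $u^{n-1}/\Gamma(n)$ together with $\prod_{i=1}^k\int_\varepsilon^{+\infty}\kappa s^{n_i}\e^{-us}\rho(s)\,ds$ then yields exactly \eqref{eq:eppf_expr}. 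The crux is the careful bookkeeping of the extra jump $J_0$: it is precisely the passage from $\Neps$ to $\Neps+1$ jumps that converts the falling factorial into the clean linear term $k+\Lambda_{\varepsilon,u}$, and I expect this combinatorial sum, rather than any individual integral, to require the most care.
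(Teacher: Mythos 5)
Your proposal is correct and follows essentially the same route as the paper's proof: conditioning on $\Neps$, applying Pitman's species-sampling formula for the conditional eppf, linearizing $\Teps^{-n}$ via the Gamma integral, factorizing over the iid jumps, and summing the Poisson series after the shift $N_{na}=\Neps+1-k$ to obtain the factor $(k+\Lambda_{\varepsilon,u})\e^{\Lambda_{\varepsilon,u}}$. The only cosmetic difference is that you collapse the sum over ordered distinct $k$-tuples by exchangeability up front, whereas the paper carries the sum $\sum_{j_1,\dots,j_k}$ explicitly until the final step.
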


\begin{proof}
We have
\begin{eqnarray}
p_{\varepsilon}(n_1,\dots,n_k)=\sum_{\Neps=0}^{+\infty} p_{\varepsilon}(n_1,\dots,n_k|\Neps)
  \frac{\Lambda_{\varepsilon}^{\Neps}}{\Neps !}\e^{-\Lambda_{\varepsilon}},
\label{eq:eppf_sum}
\end{eqnarray}
since $N_\varepsilon\sim Poi(\Lambda_\varepsilon)$. Then, equation (30) in \cite{Pitman96}
yields
\begin{equation*}
p_{\varepsilon}(n_1,\dots,n_k|\Neps)=  \uno_{ \{1,\dots,\Neps+1\} }(k)
\sum_{j_1,\dots,j_k}^{}\E\left(
  \prod_{i=1}^{k}P_{j_i}^{n_i} \right),
\end{equation*} 
where the vector $(j_1,\dots,j_k)$ ranges over all permutations of $k$ elements in $\{0,\dots,N_\varepsilon\}$. Then, using the gamma function identity,
 $1/T_\varepsilon^n=\int_{0}^{+\infty}1/\Gamma(n)u^{n-1} e^{-u T_\varepsilon} du$,
we have: 
 \begin{equation*}
  \begin{split}
  & p_{\varepsilon}(n_1,..,n_k|\Neps) =\uno_{ \{1,\dots,\Neps+1\} }(k)\sum_{j_1,\dots,j_k}    \int_{}^{}
   \prod_{i=1}^{k}\frac{J_{j_i}^{n_i}}{T_{\varepsilon}^{n_i}} \mathcal{L}(dJ_0,\dots,dJ_{\Neps})\\
 \quad &= \uno_{ \{1,\dots,\Neps+1\} }(k) \sum_{j_1,\dots,j_k} \int_{0}^{+\infty} du \left( \frac{1}{\Gamma(n)}u^{n-1}  
 \prod_{i=1}^{k}\int_{0}^{+\infty}J_{j_i}^{n_i}\e^{-J_{j_i}u}\rho_{\varepsilon}(J_{\correz{j_i}})dJ_{j_i} \right.  \\ 
  &  \qquad \qquad \qquad\qquad \qquad \qquad \qquad  \qquad \qquad \qquad \qquad\times \left.
  \prod_{j\notin\{j_1,\dots,j_k\}}^{}\int_{0}^{+\infty}\e^{-J_{j}u}\rho_{\varepsilon}(\correz{J_j})dJ_j \right).
 \end{split}
 \end{equation*}
Now, by the definition of $\rho_\varepsilon$ in \eqref{eq:rhoeps} and adopting the notation $\tilde \rho(s):= \kappa \rho(s)$, it is straightforward to see that
 \begin{equation*}
  \begin{split}
   & p_{\varepsilon}(n_1,..,n_k|\Neps) =  \uno_{ \{1,\dots,\Neps+1\} }(k) \sum_{j_1,\dots,j_k} \int_{0}^{+\infty} du \left( \frac{1}{\Gamma(n)}u^{n-1} 
   \prod_{i=1}^{k}\int_{\varepsilon}^{+\infty}J_{j_i}^{n_i}\e^{-J_{j_i}u} \ \dfrac{\tilde\rho(J_{j_i})}{\Lambda_\varepsilon}dJ_{j_i}  \right.\\ 
  &  \qquad \qquad  \qquad \qquad\qquad \qquad \qquad \qquad  \qquad \qquad \qquad \qquad \times \left.
  \prod_{j\notin\{j_1,\dots,j_k\}}^{}\int_{\varepsilon}^{+\infty}\e^{-J_{j}u} \ \dfrac{\tilde\rho(J_{j})}{\Lambda_\varepsilon}dJ_{j} \right) .
  \end{split}
 \end{equation*}
By $\eqref{eq:eppf_sum}$, we have
  \begin{equation*}
  \begin{split}
  & p_\varepsilon(n_1, \dots, n_k)=\sum_{N_\varepsilon=0}^{+\infty}\uno_{ \{1,\dots,\Neps+1\} }(k) \sum_{j_1,\dots,j_k} \int_{0}^{+\infty} du \left( \frac{u^{n-1}}{\Gamma(n)} 
   \dfrac{1}{\Lambda_\varepsilon^k} \prod_{i=1}^{k}\int_{\varepsilon}^{+\infty}J_{j_i}^{n_i}\e^{-J_{j_i}u} \tilde\rho(J_{j_i})dJ_{j_i}  \right.\\ 
   &     \qquad\qquad \qquad \qquad \qquad  \qquad \qquad \qquad \qquad \times \left.
  \dfrac{1}{\Lambda_\varepsilon^{N_\varepsilon+1-k}} \prod_{j\notin\{j_1,\dots,j_k\}}^{}\int_{\varepsilon}^{+\infty}\e^{-J_{j}u} \ 
  \tilde\rho(J_{j})dJ_{j} \right) \dfrac{\Lambda_\varepsilon^{N_\varepsilon}}{N_\varepsilon!}e^{-\Lambda_\varepsilon}\\
 &\qquad  \qquad \qquad = \sum_{N_\varepsilon=0}^{+\infty}\uno_{ \{1,\dots,\Neps+1\} }(k) \dfrac{e^{-\Lambda_\varepsilon}}{\Lambda_\varepsilon N_\varepsilon!}  
 \int_{0}^{+\infty} \Biggl\{\frac{u^{n-1}}{\Gamma(n)}\left(\int_{\varepsilon}^{+\infty}\e^{-J_{j}u}\tilde\rho(J_{j})dJ_{j} \right)^{N_\varepsilon+1-k}
\\
&  \qquad \qquad \qquad   \qquad\qquad \qquad \qquad \qquad  \qquad \qquad \qquad  \times 
\sum_{j_1,\dots,j_k} \left(  \prod_{i=1}^{k}\int_{\varepsilon}^{+\infty}J_{j_i}^{n_i}\e^{-J_{j_i}u}\tilde\rho(J_{j_i})dJ_{j_i} \right)\Biggr\} du. 
  \end{split}
\end{equation*}
Denoting by  $N_{na}:=N_\varepsilon+1-k$ the number of
non-allocated jumps, we get
  \begin{equation*}
  \begin{split}
  & p_\varepsilon(n_1, \dots, n_k)
= \int_{0}^{+\infty} \Biggl\{\frac{u^{n-1}}{\Gamma(n)} \dfrac{e^{-\Lambda_\varepsilon}}{\Lambda_\varepsilon} 
 \sum_{N_\varepsilon=0}^{+\infty} 
 \dfrac{N_\varepsilon+1}{(N_\varepsilon+1-k)!}\uno_{ \{1,\dots,\Neps+1\} }(k)
 \\
 & \qquad  \qquad \qquad\qquad  \qquad  \times  \left(\int_{\varepsilon}^{+\infty}\e^{-J_{j}u}\tilde\rho(J_{j})dJ_{j} \right)^{N_\varepsilon+1-k}
 \left(  \prod_{i=1}^{k}\int_{\varepsilon}^{+\infty}J_{j_i}^{n_i}\e^{-J_{j_i}u}\tilde\rho(J_{j_i})dJ_{j_i} \right)  \Biggr\} du \\
&= \int_{0}^{+\infty}  \frac{u^{n-1}}{\Gamma(n)} \dfrac{e^{-\Lambda_\varepsilon}}{\Lambda_\varepsilon} 
 \prod_{i=1}^{k}\int_{\varepsilon}^{+\infty}J_{j_i}^{n_i}\e^{-J_{j_i} u}\tilde\rho(J_{j_i})dJ_{j_i}
 \sum_{N_{na}=0}^{+\infty} \Lambda_{\varepsilon,u}^{N_{na}}
 \dfrac{N_{na}+k}{N_{na}!}\uno_{ \{1,\dots,\Neps+1\} }(k)\ du.
  \end{split}
\end{equation*}
Since the last  summation adds up to $e^{\Lambda_{\varepsilon,u}}\left(\Lambda_{\varepsilon,u}+k\right)$, the 
$p_\varepsilon(n_1, \dots, n_k)$ and the right hand-side of \eqref{eq:eppf_expr} coincide.
\end{proof}

A result concerning the eppf of a generic normalized (homogeneous) completely random measure can be readily obtained from \cite{Pitman03}, formulas (36)-(37): 
\begin{equation}
\label{eq:eppf_hnrmi}
 p(n_1, \dots, n_k) = \int_0^{+\infty}\dfrac{u^{n-1}}{\Gamma(n)}\e^{\kappa\int_0^{+\infty}(\e^{-us}-1)\rho(s)ds }
\left( \prod_{i=1}^k\int_0^{+\infty}\kappa s^{n_i}\e^{-us}\rho(s)ds\right) du.
\end{equation}
Now we are ready to show that the eppf of  \eqref{eq:Pvarepsilon}
converges pointwise to that of the corresponding (homogeneous) normalized
completely random measure  \eqref{eq:NRMI} 
when $\varepsilon \to 0$.
\begin{proposition} 
\label{prop:conv_eppf}
Let $p_\varepsilon(\cdot)$ be the eppf of a $\varepsilon-$NormCRM$(\rho, \kappa P_0)$. 
Then for any sequence $n_1,\dots,n_k$ of positive integers with $k>0$ and
$\sum_{i=1}^{k}n_i=n$,
\begin{equation}
  \lim_{\varepsilon\rightarrow0}p_{\varepsilon}(n_1,\dots,n_k)=p_0(n_1,\dots,n_k),  
\label{eq:conv_eppf}
\end{equation}
where $p_0(\cdot)$ is the eppf of the NormCRM$(\rho,\kappa P_0)$ as in \eqref{eq:eppf_hnrmi}. 
\end{proposition}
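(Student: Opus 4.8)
The plan is to pass to the limit $\varepsilon\to0$ directly inside the integral representation \eqref{eq:eppf_expr}. Writing the integrand there as $g_\varepsilon(u)$, I would first show that $g_\varepsilon(u)$ converges, for every fixed $u>0$, to the integrand $g_0(u)$ of \eqref{eq:eppf_hnrmi}, and then promote this pointwise statement to convergence of the integrals. The key idea for the second step is to avoid constructing an $\varepsilon$-uniform dominating function altogether: a one-sided bound from Fatou's lemma, combined with the fact that both $p_\varepsilon$ and $p_0$ are probability distributions over the \emph{finite} set of partitions of $\{1,\dots,n\}$, already forces equality in the limit.

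For the pointwise convergence I would treat the three $\varepsilon$-dependent factors of $g_\varepsilon(u)$ separately, using the regularity conditions \eqref{eq:regolarita}. By the right-hand condition, $\Lambda_\varepsilon=\kappa\int_\varepsilon^{+\infty}\rho(s)ds\to+\infty$, while by the left-hand condition the difference $\Lambda_\varepsilon-\Lambda_{\varepsilon,u}=\kappa\int_\varepsilon^{+\infty}(1-\e^{-us})\rho(s)ds$ converges, as $\varepsilon\to0$, to the finite quantity $\kappa\int_0^{+\infty}(1-\e^{-us})\rho(s)ds$ (the integrand being bounded by $\min\{1,us\}\rho(s)$). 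Hence the ratio $(k+\Lambda_{\varepsilon,u})/\Lambda_\varepsilon=k/\Lambda_\varepsilon+1-(\Lambda_\varepsilon-\Lambda_{\varepsilon,u})/\Lambda_\varepsilon\to1$, and the exponential factor $\e^{\Lambda_{\varepsilon,u}-\Lambda_\varepsilon}\to\e^{\kappa\int_0^{+\infty}(\e^{-us}-1)\rho(s)ds}$, which is exactly the exponential appearing in \eqref{eq:eppf_hnrmi}. Finally, each factor $\int_\varepsilon^{+\infty}\kappa s^{n_i}\e^{-us}\rho(s)ds$ increases to $\int_0^{+\infty}\kappa s^{n_i}\e^{-us}\rho(s)ds$ by monotone convergence. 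Multiplying the three limits gives $g_\varepsilon(u)\to g_0(u)$ for every $u>0$, with $g_\varepsilon\ge0$.

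The interchange of limit and integral is the only genuinely delicate point, and it is exactly where a direct argument would run into trouble: the crude bounds $(k+\Lambda_{\varepsilon,u})/\Lambda_\varepsilon\le2$ (for $\varepsilon$ small), $\e^{\Lambda_{\varepsilon,u}-\Lambda_\varepsilon}\le1$ and $\int_\varepsilon^{+\infty}\le\int_0^{+\infty}$ produce the majorant $2\,u^{n-1}/\Gamma(n)\prod_i\int_0^{+\infty}\kappa s^{n_i}\e^{-us}\rho(s)ds$, which in general is not integrable near $u=+\infty$ — the decay of the true integrand at large $u$ is supplied precisely by the exponential factor, and since that factor tends to its limit from above it cannot be replaced by the limiting exponential. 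I would therefore sidestep domination: Fatou's lemma applied to $g_\varepsilon\ge0$ yields, along any sequence $\varepsilon_m\to0$, the termwise lower bound $p_0(n_1,\dots,n_k)=\int_0^{+\infty}g_0\le\liminf_{m}\int_0^{+\infty}g_{\varepsilon_m}=\liminf_{m}p_{\varepsilon_m}(n_1,\dots,n_k)$, valid for every partition.

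To close, I would use that the number of partitions of $\{1,\dots,n\}$ is finite and that $\sum_\pi p_\varepsilon(\pi)=\sum_\pi p_0(\pi)=1$ (both are eppf's of well-defined random probability measures; the extra atom $J_0$ guarantees $P_\varepsilon$ is proper). Given any sequence $\varepsilon_m\to0$, by finiteness of the partition set I can extract a subsequence along which $p_{\varepsilon_m}(\pi)\to\ell(\pi)$ for every $\pi$ simultaneously; the Fatou bound gives $p_0(\pi)\le\ell(\pi)$, while exchanging the finite sum with the limit yields $\sum_\pi\ell(\pi)=\lim_m\sum_\pi p_{\varepsilon_m}(\pi)=1=\sum_\pi p_0(\pi)$. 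Termwise domination with equal finite totals forces $\ell(\pi)=p_0(\pi)$ for all $\pi$; since every sequence has such a subsequence converging to $p_0$, the full limit \eqref{eq:conv_eppf} follows. The main obstacle is thus conceptual rather than computational: recognizing that the awkward tail in $u$ never has to be dominated, because the probabilistic normalization of the eppf's over a finite index set upgrades the Fatou inequality to the desired equality.
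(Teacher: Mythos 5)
Your proposal is correct and follows essentially the same route as the paper: pointwise convergence of the integrand $f_\varepsilon(u;n_1,\dots,n_k)\to f_0(u;n_1,\dots,n_k)$ using the regularity conditions \eqref{eq:regolarita}, then Fatou's lemma to get the termwise lower bound, and finally the normalization $\sum_{\mathcal{C}\in\Pi_n}p_\varepsilon=\sum_{\mathcal{C}\in\Pi_n}p_0=1$ over the finite set of partitions to upgrade the inequality to the limit (the paper delegates this last step to Lemmas~1 and~2 of \cite{peps}, which encode exactly your subsequence argument). Your write-up in fact makes explicit the details the paper leaves to the citation, including why a dominating function is unavailable.
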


\begin{proof}
By Proposition~\ref{prop:eppfPeps} 
\begin{equation*}
  p_{\varepsilon}(n_1,\dots,n_k)=\int_{0}^{+\infty}f_{\varepsilon}(u;n_1,\dots,n_k)du 
\end{equation*}
where 
\begin{equation}\label{eq:integr_eppf}
f_{\varepsilon}(u; n_1,\ldots,n_k)= \dfrac{u^{n-1}}{\Gamma(n)}\dfrac{(k+\Lambda_{\varepsilon,u})}{\Lambda_{\varepsilon}}e^{\left(\Lambda_{\varepsilon,u}-\Lambda_\varepsilon\right)}
\prod_{i=1}^k \int_{\varepsilon}^{+\infty} \kappa s^{n_i}\e^{-us}\rho(s)ds, \quad u>0.
\end{equation}
On the other hand, the eppf of a NormCRM$(\rho, \kappa P_0)$ can be written as 
\begin{equation*}
  p_0(n_1,\dots,n_k)=\int_{0}^{+\infty} f_0(u;n_1,\dots,n_k)du, 
\end{equation*}
where 
\begin{equation*}
f_0(u;n_1,\dots,n_k)=\dfrac{u^{n-1}}{\Gamma(n)}\exp\left\{\kappa\int_0^{+\infty}(\e^{-us}-1)\rho(s)ds \right\}
 \prod_{i=1}^k\int_0^{+\infty}\kappa s^{n_i}\e^{-u s}\rho(s), \quad u>0.
\end{equation*}
 We first show that  
\begin{equation}\label{eq:limite}
  \lim_{\varepsilon\rightarrow0}f_{\varepsilon}(u;n_1,\dots,n_k)=
f(u;n_1,\dots,n_k)\quad \textrm{ for any } u>0.
\end{equation}
In particular, we have that 
$$ \lim_{\varepsilon \to 0}\int_{\varepsilon}^{+\infty}s^{n_i}\e^{-u s}\rho(s)ds =  \int_{0}^{+\infty}s^{n_i}\e^{-u s}\rho(s)ds  $$
and 
$$  \lim_{\varepsilon \to 0}\e^{\Lambda_{\varepsilon,u}-\Lambda_\varepsilon}= \exp\left\{\kappa \int_0^{+\infty}(\e^{-us}-1)\rho(s)ds \right\},$$
being this limit finite for any $u>0$.
Using standard integrability criteria, it is straightforward to check that, for any $u>0$,
 $\lim_{\varepsilon \to 0}\Lambda_{\varepsilon,u}=\lim_{\varepsilon \to 0}\Lambda_{\varepsilon}=+\infty$ and they are equivalent infinite, i.e. 
$$ \lim_{\varepsilon \to 0} \dfrac{k + \Lambda_{\varepsilon,u}}{\Lambda_\varepsilon}= \lim_{\varepsilon \to 0} \dfrac{ \Lambda_{\varepsilon,u}}{\Lambda_\varepsilon} =1.$$ 
We can therefore conclude that \eqref{eq:limite} holds true.

The rest of the proof follows as in the second part of the proof of Lemma~2  in \cite{peps}, where we prove that 
$(i)$ $\lim_{\varepsilon\rightarrow0}\sum_{\mathcal{C} \in \Pi_n}^{}p_{\varepsilon}(n_1,\dots,n_k) =1$;
$(ii)$ $\liminf_{\varepsilon\rightarrow0} p_\varepsilon(n_1,\dots,n_k)=p_0(n_1,\dots,n_k)$
 for all $\mathcal{C}=(C_1$, $\dots,C_k) \in \Pi_n$, the set of all partitions of $\{1,2,\ldots,n\} $;
$(iii)$ $\sum_{\mathcal{C} \in \Pi_n}^{} p_0(n_1,\dots,n_k)=1$. By Lemma~1  in \cite{peps}, equation $\eqref{eq:conv_eppf}$ follows.

\end{proof}

Convergence of the sequence of eppfs yields convergence of the sequences of $\varepsilon$-NormCRMs, generalizing a 
result obtained for $\varepsilon$-NGG processes. 
\begin{proposition}
Let $P_\varepsilon$ be a $\varepsilon$-NormCRM$(\rho, \kappa P_0)$, for any $\varepsilon >0$. Then 
\begin{equation*}
P_\varepsilon \stackrel{d}\rightarrow P \textrm{ as } \varepsilon \rightarrow 0, 
\end{equation*}
where $P$ is a NormCRM$(\rho, \kappa P_0)$. 
Moreover, as $\varepsilon\rt +\infty$,  $P_\varepsilon \stackrel{d}\rightarrow \delta_{\tau_0}$, where $\tau_0\sim P_0$.
\end{proposition}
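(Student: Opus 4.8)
The plan is to establish the two limits separately: the $\varepsilon\to 0$ statement will be deduced from the eppf convergence already proved in Proposition~\ref{prop:conv_eppf}, while the $\varepsilon\to+\infty$ statement will follow from a direct inspection of the construction \eqref{eq:Pvarepsilon}.

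For the first limit, the guiding observation is that both $P_\varepsilon$ and $P$ are species sampling models sharing the \emph{same} centering measure $P_0$, so that the law of a finite sample $(\theta_1,\dots,\theta_n)$ drawn from either process is completely determined by the associated eppf together with $P_0$, through the characterization $\LL({\bm p}_n,\theta^*_1,\dots,\theta^*_k)=p(n_1,\dots,n_k)\prod_{j=1}^k\LL(\theta^*_j)$ recalled in Section~\ref{sec:epsHNRMI}. Fixing $n$ and a bounded continuous $g\colon\Theta^n\to\Rea$, I would write $\E[g(\theta_1^{\varepsilon},\dots,\theta_n^{\varepsilon})]$ as a sum over the partitions ${\bm p}_n$ of $\{1,\dots,n\}$, each summand being $p_\varepsilon(n_1,\dots,n_k)$ times an integral of $g$ against $P_0^{\otimes k}$ that does not depend on $\varepsilon$ and is bounded in modulus by $\|g\|_\infty$. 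Since this sum has only finitely many terms (one per partition of $\{1,\dots,n\}$) and $p_\varepsilon(n_1,\dots,n_k)\to p_0(n_1,\dots,n_k)$ for each partition by Proposition~\ref{prop:conv_eppf}, the limit commutes with the finite sum and yields $\E[g(\theta_1^{\varepsilon},\dots,\theta_n^{\varepsilon})]\to\E[g(\theta_1,\dots,\theta_n)]$; that is, the finite samples converge in distribution for every $n$.

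To pass from the convergence of all finite samples to the convergence $P_\varepsilon\stackrel{d}\to P$ of the directing random measures, I would invoke the standard correspondence for exchangeable sequences: an infinite exchangeable sequence and its de Finetti directing measure determine one another in law, and convergence in distribution of the directing measures, in the weak topology on the probability measures over $\Theta$, is equivalent to convergence in distribution of the infinite exchangeable sequences, hence to convergence of all their finite-dimensional marginals. This is precisely the step carried out for $\varepsilon$-NGG processes in \cite{peps}, and the argument transfers once the eppf convergence of Proposition~\ref{prop:conv_eppf} is available. I expect this de Finetti bridge to be the only genuinely delicate point, as one must argue in the weak topology on the space of probability measures; the reduction to finite samples and the interchange of limit with the finite sum are routine.

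For the second limit I would argue directly from \eqref{eq:Pvarepsilon}. As $\varepsilon\to+\infty$ the regularity conditions \eqref{eq:regolarita} force $\Lambda_\varepsilon=\kappa\int_\varepsilon^{+\infty}\rho(s)\,ds\to 0$, whence $\Prob(N_\varepsilon=0)=\e^{-\Lambda_\varepsilon}\to 1$. On the event $\{N_\varepsilon=0\}$ only the auxiliary jump $J_0$ is present, so $T_\varepsilon=J_0$ and $P_\varepsilon=(J_0/T_\varepsilon)\delta_{\tau_0}=\delta_{\tau_0}$ exactly, with $\tau_0\sim P_0$ independent of $N_\varepsilon$ since $\{\tau_j\}$ and $\{J_j\}$ are independent. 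Therefore $\Prob(P_\varepsilon\neq\delta_{\tau_0})\le\Prob(N_\varepsilon\ge 1)\to 0$, so $P_\varepsilon\to\delta_{\tau_0}$ in probability and a fortiori in distribution, which is the claimed limit.
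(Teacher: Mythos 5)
Your proof is correct, and for the $\varepsilon\to 0$ limit it takes a genuinely different route from the paper's. Both arguments rest on the eppf convergence of Proposition~\ref{prop:conv_eppf}, but the paper then follows Pitman's species-sampling machinery: it deduces convergence in distribution of the block sizes $(N_1^\varepsilon,\dots,N_k^\varepsilon)$ of the induced random partitions, invokes formula (2.30) of Pitman (2006) to recover the weights $\tilde P_j^\varepsilon$ as $n\to\infty$ limits of the relative block frequencies $N_j^\varepsilon/n$, and then interchanges the limits in $n$ and $\varepsilon$ (the commutative diagram) before reattaching the iid locations. You instead convert eppf convergence into convergence of all finite-dimensional laws of the exchangeable samples --- a finite sum over partitions of $\{1,\dots,n\}$, so the limit passes through termwise --- and then invoke the de Finetti/Kallenberg equivalence between convergence in distribution of exchangeable sequences and convergence in distribution of their directing random measures in the weak topology on the probability measures over $\Theta$. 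Your route avoids the interchange-of-limits step, which is the part the paper leaves most implicit, at the price of citing a standard but nontrivial equivalence theorem; the paper's route is more constructive about where the limiting weights come from. For the $\varepsilon\to+\infty$ limit your argument is the paper's one-line remark made quantitative: the regularity condition gives $\Lambda_\varepsilon\to 0$, hence $\Prob(N_\varepsilon=0)=\e^{-\Lambda_\varepsilon}\to 1$, and on that event $P_\varepsilon=\delta_{\tau_0}$ exactly. Both parts are sound.
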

\begin{proof}
Since $P_\varepsilon$ is a proper species sampling model,  $p_{\varepsilon}$ defines a probability law on the sets of all partitions of  
$\{1,\dots,n\}$, for any positive integer $n$;  let $(N_1^\varepsilon,\ldots,N_k^\varepsilon)$ denote the sizes  of the blocks (in order of appearance) of the random partition $C_{\varepsilon,n}$ defined by $p_\varepsilon$, for any $\varepsilon \geq 0$. The probability distributions of $\{(N_1^\varepsilon,\ldots,N_k^\varepsilon),\varepsilon\geq 0 \}$ are proportional to the values of $p_\varepsilon$ (for any $\varepsilon\geq 0$) in (2.6) in \cite{Pitman06}.
Hence, by Proposition~\ref{prop:conv_eppf}, for any $k=1,\ldots,n$  and any  $n$, 
\begin{equation*}
(N_1^\varepsilon,\ldots,N_k^\varepsilon) \stackrel{d}\rightarrow(N_1^0,\ldots,N_k^0)  \textrm{ as } \varepsilon \rightarrow 0,
\end{equation*}
where  $(N_1^0,\ldots,N_k^0)$  denote the sizes  of the blocks 
of the random partition $C_{\varepsilon,n}$ defined by $p_0$, the eppf of a NormCRM$(\rho, \kappa P_0)$ process. 
By formula (2.30) in \cite{Pitman06},  we have 
\begin{equation*}
\begin{CD}
 \left(\frac{N_j^\varepsilon}{n}\right) @>d>{n\rightarrow +\infty}> (\tilde P_j^\varepsilon)  \\
@V{\varepsilon \rightarrow0}VdV @.\\ 
\left(\frac{N_j^0}{n}\right) @> d >{n\rightarrow +\infty} > (\tilde P_j)
\end{CD}
\end{equation*}
where $P_j^\varepsilon$ and $\tilde P_j$ are the $j$-th weights of a $\varepsilon$-NormCRM and a NormCRM process (with parameters $(\rho,\kappa P_0)$), respectively. 
We prove that 
\begin{equation*}
 \begin{CD}
\sum_{j\ge 0} \tilde P_j^\varepsilon\delta_{\tau_j} @>d>{n\rightarrow +\infty}>  
\sum_{j\ge 0} \tilde P_j^\varepsilon \delta_{\tau_j},
\end{CD}
\end{equation*}
where  $ \tau_0,\tau_1,\tau_2,\ldots$  are iid from $P_0$
and this ends the first part of the Proposition. 

Convergence as $\varepsilon\rt +\infty$ is straightforward as well. In fact,  when $\varepsilon$ increases to $+\infty$, there are no jumps to consider in \eqref{eq:Pvarepsilon} but the extra $J_0$, so that $P_\varepsilon$ degenerates on $\delta_{\tau_0}$. 
\end{proof}

Let $\bm \theta=(\theta_1,\dots,\theta_n)$ be a sample from $P_\varepsilon$, a $\varepsilon$-NormCRM$(\rho, \kappa P_0)$ as defined in \eqref{eq:Pvarepsilon}, and let ${\bm\theta}^*=(\theta^*_1,\dots,\theta^*_k)$  be the (observed) distinct values in  ${\bm\theta}$. 
We denote by \emph{allocated} jumps of the process the values $P_{l^*_1},P_{l^*_2}, \dots, P_{l^*_k}$ in \eqref{eq:Pvarepsilon} such that there exists a corresponding location for which $\tau_{l^*_i}=\theta^*_i$, $i=1, \dots, k$.
The remaining values are \emph{non-allocated} jumps. We use the superscript $(na)$ for random variables related to \emph{non-allocated} jumps.  We also introduce the random variable $U:=\Gamma_n/\Teps$, where $\Gamma_n\sim gamma(n,1)$, being $\Gamma_n$ and $\Teps$ independent. 

\begin{proposition}
\label{prop:Posterior}
  If $P_\varepsilon$ is an $\varepsilon-$NormCRM$(\rho, \kappa P_0)$, then the conditional distribution of $P_{\varepsilon}$, 
  given ${\bm\theta}^*$ and $U=u$, verifies the distributional equation
  \begin{equation*}
    P_{\varepsilon}^*(\cdot) \stackrel{d}=
w  P_{\varepsilon,u}^{(na)}(\cdot)+(1-w)\sum_{j=1}^{k}P^{(a)}_j\delta_{\theta^*_k}(\cdot)
  \end{equation*}
  where 
  \begin{enumerate}
    \item \label{prop:noalloc}
      $P_{\varepsilon,u}^{(na)}(\cdot)$, the process of non-allocated jumps, is distributed 
      as a $\varepsilon-$NormCRM$(\e^{-u \cdot}\rho(\cdot) , \kappa P_0)$, given that exactly
      $N_{na}$ jumps of the process were obtained, and the posterior law of  $N_{na}$ is 
      $$\frac{\Lambda_{\varepsilon,u}}{k+\Lambda_{\varepsilon,u}} \mathcal{P}_1(\Lambda_{\varepsilon,u}) + 
      \frac{k}{k+\Lambda_{\varepsilon,u}}
      \mathcal{P}_0(\Lambda_{\varepsilon,u}),$$
being $\Lambda_{\varepsilon,u}$ as defined in \eqref{eq:lambda_epsu}, and  denoting $\mathcal{P}_i(\lambda)$ the shifted Poisson distribution on $\{i,i+1,i+2,\ldots \}$ with mean $i+\lambda$, $i=0,1$; 
    
\item  \label{prop:alloc} the allocated jumps $\{P_1^{(a)},\dots,P_k^{(a)}\}$ \emph{associated} to the fixed points of discontinuity ${\bm\theta}^*=(\theta^*_1,\dots,\theta^*_k)$ of $P^*_\varepsilon$
      are obtained by normalization of 
      $J_j^{(a)} \stackrel{ind}{\sim} J_j^{n_i}\e^{-u J_j}e^{-uJ_j}\rho(J_j)\uno_{(\varepsilon, +\infty)}(J_j)$, for $j=1\dots,k$;

\item \label{prop:indep} $P_{\varepsilon,u}^{(na)}(\cdot)$ and $\{J_1^{(a)},\cdots,J_k^{(a)}\}$ are independent, conditionally to 
${\bm l}^*=(l^*_1,\ldots,l^*_k )$, the vector of locations of the \emph{allocated} jumps;  
 
\item  \label{prop:pesomist}  $w$ is defined as 0 when $N_{na}=0$, 
otherwise $w=T_{\varepsilon,u}/ (  T_{\varepsilon,u}+\sum_{j=1}^{k}
J_j^{(a)})$. $T_{\varepsilon,u}$ is the total sum of the jumps in
representation of $P_{\varepsilon,u}^{(na)}(\cdot)$ as in
$\eqref{eq:Pvarepsilon}$;

\item  \label{prop:margU} the posterior law of $U$ given  ${\bm\theta}^*$ has density on the positive real given by
  \begin{equation*}
    f_{U|\bm \theta^{*}}(u|\bm \theta^{*})\propto 
   u^{n-1} \e^{\Lambda_{\varepsilon,u}-\Lambda_\varepsilon}\dfrac{\Lambda_{\varepsilon,u}+k}{\Lambda_\varepsilon}
   \prod_{i=1}^k\int_\varepsilon^{+\infty}\kappa s^{n_i}\e^{-u s}\rho(s) ds, \quad u>0.
  \end{equation*}
\end{enumerate}

\end{proposition}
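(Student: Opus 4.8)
The plan is to reproduce the disintegration behind Proposition~\ref{prop:eppfPeps}, but to stop short of integrating out the latent quantities, retaining instead the individual jumps, their allocation labels, the count $N_\varepsilon$, and the auxiliary variable $U$. I would start from the joint law of the sample $\bm\theta$ and the underlying Poisson structure $(N_\varepsilon,\{J_j\},\{\tau_j\})$. Conditioning on the partition induced by $\bm\theta$, the likelihood contributes $\prod_{i=1}^k P_{l_i^*}^{n_i}=T_\varepsilon^{-n}\prod_{i=1}^k J_{l_i^*}^{n_i}$, and the gamma identity $T_\varepsilon^{-n}=\Gamma(n)^{-1}\int_0^{+\infty}u^{n-1}\e^{-uT_\varepsilon}\,du$ introduces $U$ exactly as in the eppf computation; this is the same device that defines $U=\Gamma_n/T_\varepsilon$ above.

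The key algebraic step is to use $\e^{-uT_\varepsilon}=\prod_{j=0}^{N_\varepsilon}\e^{-uJ_j}$, so that the exponential tilt factorizes over the $N_\varepsilon+1$ jumps. Each of the $k$ allocated jumps then acquires the weight $J^{n_i}\e^{-uJ}\rho_\varepsilon(J)\propto J^{n_i}\e^{-uJ}\rho(J)\uno_{(\varepsilon,+\infty)}(J)$, which is item~\ref{prop:alloc} (the exponent stated there being a typographical doubling of $\e^{-uJ}$), while each non-allocated jump acquires $\e^{-uJ}\rho_\varepsilon(J)\propto\e^{-uJ}\rho(J)\uno_{(\varepsilon,+\infty)}(J)$, i.e. the intensity $\rho$ tilted to $\e^{-u\cdot}\rho(\cdot)$; conditioning on their number $N_{na}$, the non-allocated atoms thus form an $\varepsilon$-NormCRM$(\e^{-u\cdot}\rho,\kappa P_0)$, giving the distributional claim in item~\ref{prop:noalloc}. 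Because the tilt splits into an allocated block and a non-allocated block once the labels $\bm l^*$ are fixed, the two blocks separate into a product, yielding the conditional independence of item~\ref{prop:indep}. Writing $P_\varepsilon^*$ as the posterior unnormalized measure divided by its total mass $T_\varepsilon=T_{\varepsilon,u}+\sum_{j=1}^kJ_j^{(a)}$ and grouping the normalized non-allocated part against the $k$ allocated atoms then produces the mixture with weight $w$ in item~\ref{prop:pesomist}.

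The delicate point is the posterior law of $N_{na}$. Summing the Poisson weight $\Lambda_\varepsilon^{N_\varepsilon}/N_\varepsilon!$ against the $(N_\varepsilon+1)!/(N_\varepsilon+1-k)!$ ordered ways of assigning the $k$ allocated labels among the $N_\varepsilon+1$ jumps, and substituting $\rho_\varepsilon=\kappa\rho/\Lambda_\varepsilon$, all powers of $\Lambda_\varepsilon$ cancel except a single one, leaving exactly the factor $\frac{N_{na}+k}{N_{na}!}\Lambda_{\varepsilon,u}^{N_{na}}$ with $N_{na}=N_\varepsilon+1-k$, as already encountered in Proposition~\ref{prop:eppfPeps}. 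Splitting $N_{na}+k$ into $N_{na}$ and $k$ decomposes this into $\Lambda_{\varepsilon,u}^{N_{na}}/(N_{na}-1)!$ (for $N_{na}\ge1$) plus $k\,\Lambda_{\varepsilon,u}^{N_{na}}/N_{na}!$; since these sum to $\Lambda_{\varepsilon,u}\e^{\Lambda_{\varepsilon,u}}$ and $k\e^{\Lambda_{\varepsilon,u}}$ respectively, normalizing by $\e^{\Lambda_{\varepsilon,u}}(\Lambda_{\varepsilon,u}+k)$ identifies the law of $N_{na}$ as the two-component mixture $\frac{\Lambda_{\varepsilon,u}}{\Lambda_{\varepsilon,u}+k}\mathcal{P}_1(\Lambda_{\varepsilon,u})+\frac{k}{\Lambda_{\varepsilon,u}+k}\mathcal{P}_0(\Lambda_{\varepsilon,u})$, completing item~\ref{prop:noalloc}.

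Finally, integrating out the individual jumps and summing over $N_{na}$ collapses the non-allocated contribution to the scalar $\e^{-\Lambda_\varepsilon}\e^{\Lambda_{\varepsilon,u}}(\Lambda_{\varepsilon,u}+k)/\Lambda_\varepsilon$, so that the joint law of $(U,\bm\theta^*)$ restricted to the observed partition is proportional to $u^{n-1}\e^{\Lambda_{\varepsilon,u}-\Lambda_\varepsilon}\frac{\Lambda_{\varepsilon,u}+k}{\Lambda_\varepsilon}\prod_{i=1}^k\int_\varepsilon^{+\infty}\kappa s^{n_i}\e^{-us}\rho(s)\,ds$; reading this as a function of $u$ gives the conditional density in item~\ref{prop:margU}. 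I expect the only genuinely non-routine part to be the combinatorial accounting that produces the $(N_{na}+k)$ weight and hence the shifted-Poisson mixture; the remainder is the standard NRMI disintegration of \cite{jamesetal09} transported to the finite $\varepsilon$-truncation, where the extra atom $J_0$ is precisely what makes the count $N_\varepsilon+1$ (rather than $N_\varepsilon$) appear and so generates the two mixture components.
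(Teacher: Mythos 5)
Your proposal is correct and follows essentially the same route as the paper's proof: the augmented disintegration via the gamma identity for $T_\varepsilon^{-n}$, the factorization of $\e^{-uT_\varepsilon}$ over the $N_\varepsilon+1$ jumps into allocated and non-allocated blocks, the combinatorial sum over the $(N_\varepsilon+1)!/(N_\varepsilon+1-k)!$ label assignments producing the $(N_{na}+k)\Lambda_{\varepsilon,u}^{N_{na}}/N_{na}!$ weight and hence the shifted-Poisson mixture, and the final summation over $N_{na}$ for the marginal of $U$. You also correctly identify the doubled $\e^{-uJ}$ in item~2 as a typographical artefact.
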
 
This  proposition is the  ``finite dimensional'' counterpart of Theorem~1 in  \cite{jamesetal09}.

\begin{proof}
The first steps of the proof are the same as in the proof of Proposition~2 in \cite{peps}; in particular, the joint law of ${\bm \theta}, u, \Peps$, $\calL({\bm\theta}, u, P_{\varepsilon})$, is as in (16) in \cite{peps}. 
The conditional distribution of $\Peps$, given $U=u$ and $\bm\theta$, is as follows:
\begin{equation}
\mathcal{L}(P_{\varepsilon}|u,\bm\theta)= \mathcal{L}({\bm \tau},{\bm J},N_\varepsilon|u,\bm\theta)
=  \mathcal{L}({\bm \tau},{\bm J}|N_\varepsilon, u,\bm\theta)  \mathcal{L}(N_\varepsilon|u,\bm\theta),
\label{eq:post_peps}
\end{equation}
where the second factor in the right hand side is proportional to 
\begin{align*}
 \mathcal{L}(N_\varepsilon, u,\bm\theta) &= \int d J_0 \ldots d J_{\Neps} d\tau_0\ldots d\tau_{\Neps}
\mathcal{L}({\bm \tau},{\bm J},N_\varepsilon, u,\bm\theta)\\
& =\sum_{l^*_1, \dots, l^*_k}{}  \biggl\{
   \biggl[      
    \prod_{i=1}^{k}\int J_{l^*_i}^{n_i}  \delta_{\tau_{l^*_i}} (\theta^*_i)
e^{-uJ_{l^*_i}}\rho_\varepsilon(J_{l^*_i})P_0(\tau_{l^*_i})
    dJ_{l^*_i}d\tau_{l^*_i}   
    \biggr]   \\   
   &  \qquad \times\biggl[
    \prod_{j \neq \{ l^*_1, .., l^*_k \} } \int e^{-uJ_{j}}\rho_\varepsilon(J_{j})P_0(\tau_{j})
    dJ_{j}d\tau_{j} 
    \biggr]   
   \biggr\}
   \frac{1}{\Gamma(n)}u^{n-1} e^{-\Lambda_\varepsilon}\dfrac{\Lambda_\varepsilon^{N_\varepsilon}}{N_\varepsilon!}\\
   & =  \sum_{l^*_1, \dots, l^*_k}{}  \biggl\{\biggl[      
    \prod_{i=1}^{k}\int J_{l^*_i}^{n_i} \dfrac{\e^{-u J_{l^*_i}}e^{-uJ_{l^*_i}}\rho(J_{l^*_i})}{\Lambda_\varepsilon}dJ_{l^*_i}P_0({\theta_i}^*	)\biggr]
      \prod_{j \neq \{ l^*_1, .., l^*_k \}}\biggl(\dfrac{ \Lambda_{\varepsilon, u}}{\Lambda_\varepsilon}\biggr) \biggr\} 
       \frac{u^{n-1}}{\Gamma(n)} e^{-\Lambda_\varepsilon}\dfrac{\Lambda_\varepsilon^{N_\varepsilon}}{N_\varepsilon!}\\
   &=\frac{u^{n-1}}{\Gamma(n)}e^{-\Lambda_\varepsilon}\dfrac{\Lambda_\varepsilon^{N_\varepsilon}}{N_\varepsilon!}
    \sum_{l^*_1, \dots, l^*_k}{} \biggl\{ \dfrac{1}{\Lambda_\varepsilon^k} 
    \prod_{i=1}^k \biggl(P_0(\theta^*_i)\int_{\varepsilon}^{+\infty}\kappa s^{n_i}\e^{-u s}\rho(s)ds \biggr) \dfrac{\Lambda_{\varepsilon,u}^{N_\varepsilon+1-k}}{\Lambda_\varepsilon^{N_\varepsilon+1-k}} \biggr\}\\
    & = \frac{u^{n-1}}{\Gamma(n)}e^{-\Lambda_\varepsilon}\dfrac{\Lambda_{\varepsilon,u}^{N_\varepsilon+1-k}}{\Lambda_\varepsilon}
    \dfrac{(N_\varepsilon+1)}{(N_\varepsilon+1-k)!}\prod_{i=1}^k\biggl( P_0(\theta^*_i)\int_{\varepsilon}^{+\infty}\kappa s^{n_i}\e^{-u s}\rho(s)ds  \biggr).
\end{align*}

We have already introduced in this paper $N_{na}= N_\varepsilon+1-k$, the number of non-allocated jumps. 
Of course, the conditional distribution $ \calL(\Neps|u, \boldsymbol{\theta})$ in \eqref{eq:post_peps} is identified by $\mathcal{L}(N_{na}|u, \boldsymbol{\theta}) $, which can be derived as
\begin{equation}
\label{eq:fullcond_Nna}
\begin{split}
 \mathcal{L}(N_{na}|u, \boldsymbol{\theta}) & \propto \mathcal{L}(N_{na},u, \boldsymbol{\theta}) \propto 
 \Lambda_{\varepsilon,u}^{N_{na}} \dfrac{(N_{na}+k)}{N_{na}!} \\ 
 & \propto  e^{-\Lambda_{\varepsilon,u}}\biggl( \dfrac{\Lambda_{\varepsilon,u}}{(N_{na}-1)!}\Lambda_{\varepsilon,u}^{N_{na}-1} +
 \dfrac{k}{N_{na}!}\Lambda_{\varepsilon,u}^{N_{na}}\biggr)\uno_{(N_{na}\geqslant0)}\\
& \propto \dfrac{\Lambda_{\varepsilon,u}}{\Lambda_{\varepsilon,u} + k}\mathcal{P}_1(N_{na};\Lambda_{\varepsilon,u}) +  
 \dfrac{k}{\Lambda_{\varepsilon,u} + k}\mathcal{P}_0(N_{na};\Lambda_{\varepsilon,u}).
\end{split}
\end{equation}
On the other hand, the first factor in the right hand side of \eqref{eq:post_peps} can be computed 
by introducing ${\bm l}^*=(l^*_1,\ldots,l^*_{\correz{k}} )$, the vector of indexes of the allocated jumps
and by observing that the augmented right hand side of \eqref{eq:post_peps}
\begin{equation}
\begin{split}
&\mathcal{L}({\bm J},{\bm \tau}, {\bm l}^*|N_{na}, u,\bm\theta)  = 
 J_{l^*_1}^{n_1}
  \delta_{\tau^*_{l^*_1}}(\theta^*_1) \ldots  J_{l^*_k}^{n_k}
\delta_{\tau^*_{l^*_k}} (\theta^*_k) 
\prod_{j=0}^{N_{na}+k-1}\rho_\varepsilon(J_j) P_0(\tau_j)\e^{-u J_j}\\
& \quad= \left( \prod_{i=1}^k   J_{l^*_i}^{n_i} \e^{-u J_{l^*_i}} \dfrac{\kappa\rho(J_{l^*_i})}{\Lambda_\varepsilon}
\delta_{\tau_{l^*_i}}(\theta^*_i )P_0(\tau_{l^*_i})
\right) \left( \prod_{j \neq \{ l^*_1, .., l^*_k \} }  e^{-uJ_{j}}\dfrac{\kappa\rho(J_{j})}{\Lambda_\varepsilon}P_0(\tau_{j}) 
\right)\\
& \quad = \dfrac{1}{\Lambda_\varepsilon^{N_\varepsilon+1}}
\biggl( \prod_{i=1}^k   
J_{l_i^*}^{n_i}\e^{-u J_{l_i^*}}\kappa  \rho(J_{l_i^*})\delta_{\tau_{l_i^*}} P_0(\tau_{l^*_i})\biggr)
\left( \prod_{j \neq \{ l^*_1, .., l^*_k \} }^{} \e^{-u J_j} \kappa \rho(J_j)P_0(\tau_{j}) \right).
\label{eq:postAugm}
\end{split}
\end{equation}
The first factor in the last expression refers to the unnormalized \emph{allocated} process: the support is ${\bm \theta}^*$.
This shows point \ref{prop:alloc}. of the Proposition.

Therefore, the conditional distribution of $P_\varepsilon$ is proportional to the following expression: 
\begin{align*}
 \mathcal{L}(P_\varepsilon|u, \boldsymbol{\theta}) & \propto 
 \sum_{l^*_1, \dots, l^*_k} \dfrac{1}{\Lambda_\varepsilon^{N_\varepsilon+1}}
\biggl( \prod_{i=1}^k   
J_{l_i^*}^{n_i}\e^{-u J_{l_i^*}} \kappa \rho(J_{l_i^*})\delta_{\tau_{l_i^*}} P_0(\tau_{l^*_i})\biggr)
\left( \prod_{j \neq \{ l^*_1, .., l^*_k \} }^{} \e^{-u J_j} \kappa \rho(J_j)P_0(\tau_{j}) \right)\\
& \hspace{4cm} \times \dfrac{\Lambda_{\varepsilon,u}^{N_\varepsilon+1-k}}{\Lambda_\varepsilon}e^{-\Lambda_\varepsilon}
\dfrac{(N_\varepsilon+1)}{(N_\varepsilon+1-k)!}\prod_{i=1}^k P_0(\theta^*_i)\int_\varepsilon^{+\infty}s^{n_i}\kappa\e^{-u s}\rho(s)ds. 
\end{align*}
This yields points 1.,3. and 4. of the Proposition.

To show point \ref{prop:margU}., we need to integrate out $\Neps$ from $\LL(\Neps,u,{\bm\theta})$; we have: 
\begin{align*}
 \mathcal{L}(u|\boldsymbol{\theta}) &\propto \sum_{N_\varepsilon=0}^{+\infty} \mathcal{L}(N_\varepsilon,u, \boldsymbol{\theta}) = 
 \sum_{N_\varepsilon=0}^{+\infty}\dfrac{u^{n-1}}{\Gamma(n)}\e^{-\Lambda_\varepsilon}\dfrac{\Lambda_{\varepsilon,u}^{N_\varepsilon+1-k}}{\Lambda_\varepsilon}
 \dfrac{N_\varepsilon+1}{(N_\varepsilon+1-k)!}\prod_{i=1}^k\int_\varepsilon^{+\infty}\kappa s^{n_i}\e^{-u s}\rho(s)ds\\
 & = \dfrac{u^{n-1}}{\Gamma(n)}\e^{-\Lambda_\varepsilon}\sum_{N_{na}=0}^{+\infty}\dfrac{\Lambda_{\varepsilon,u}^{N_{na}}}{\Lambda_\varepsilon}
 \dfrac{N_{na}+k}{N_{na}!}\biggl(\prod_{i=1}^k\int_\varepsilon^{+\infty}\kappa s^{n_i}\e^{-u s}\rho(s)ds \biggr)\\
& = \dfrac{u^{n-1}}{\Gamma(n)}e^{\Lambda_{\varepsilon,u}-\Lambda_\varepsilon}\dfrac{\Lambda_{\varepsilon,u}+k}{\Lambda_\varepsilon}
\biggl(\prod_{i=1}^k\int_\varepsilon^{+\infty}\kappa s^{n_i}\e^{-u s}\rho(s)ds \biggr).
 \end{align*}
This ends the proof.
\end{proof}

\section{Prior moments of $\Peps$} 
\label{sec:moments}
Before deriving the first two moments of $\Peps$, let us mention that the expect value and variance of $N_\varepsilon$, the number of jumps considered in the approximation $\Peps$, depend on the prior of $\varepsilon$. Of course, if $\varepsilon$ is assumed fixed, $ \E(N_\varepsilon)= \Var(N_\varepsilon)=\Lambda_\varepsilon<+\infty$, while, if  $\varepsilon$ is random, then   
\begin{equation*}
 \E(N_\varepsilon)=\E((N_\varepsilon|\varepsilon))=\E(\Lambda_\varepsilon), \quad  \Var(N_\varepsilon)=\Var(\Lambda_\varepsilon) + \E(\Lambda_\varepsilon). 
\end{equation*}
In this case, the mean and variance of $N_\varepsilon$ are not necessarily finite; see, for instance, Table 2 in
\cite{peps},  where $\Peps$ is the $\varepsilon$-NGG process, and for some values of its hyperparameters the mean or the variance of $N_\varepsilon$ are infinite. 

First of all, observe that 
\begin{align}
\left(x_1+\cdots+x_{\Neps^*} \right)^m & = 
\sum_{\substack{ m_1+\cdots+m_{\Neps^*}=m \label{eq:multinomial}\\
m_1,\ldots,m_{\Neps^*} \ge 0}}
 \binom{m}{m_1,\ldots,m_ {\Neps^*}}  \prod_{j=1}^{\Neps^*} x_j^{m_j}\\
& = \sum_{k=1}^m \uno_{\{1,\ldots, \Neps^* \}} (k) \frac{1}{k!} 
\sum_{\substack{n_1+\cdots+n_k=m\\ n_j=1,2,\ldots}}  \binom{m}{n_1,\ldots,n_k}  \left( \sum_{j_1,\ldots,j_k}\prod_{i=1}^k x_{j_i}^{n_i} \right)
\end{align}
where $\Neps^*=\Neps+1$, $x_j^0=1$ for all $x_j\geq 0$, and the last summation is over all positive integers, being \eqref{eq:multinomial} the multinomial theorem. The second equality follows straightforward from different identifications of the set of all partitions of $m$ \citep[see][Section 1.2]{Pitman06}. Therefore, for any $B\in\calB(\Theta)$, $m=1,2,\ldots$,  we have 
(here, instead of $P_0$ and $\tau_0$ as in \eqref{eq:Pvarepsilon}, there are $P_{\Neps^*}$ and $\tau_{\Neps^*}$):
\begin{align*}
   \E&(\Peps(B)^m )  = \E\left( \E \left( (\sum_{j=1}^{\Neps^*}
P_j\delta_{\tau_j}(B))^m |\Neps \right)\right)\\ 
&= \E\left( \E \left( \sum_{\substack{ m_1+\cdots+m_{\Neps^*}=m\\
m_1,\ldots,m_{\Neps^*} \ge 0}}
 \binom{m}{m_1,\ldots,m_ {\Neps^*}}  \prod_{j=1}^{\Neps^*} (P_j\delta_{\tau_j}(B))^{m_j} |\Neps \right) \right)\\
&=  \E\left( \E \left(  \sum_{k=1}^m \uno_{\{1,\ldots, \Neps^* \}} (k) \frac{1}{k!} 
\sum_{\substack{n_1+\cdots+n_k=m\\ n_j=1,2,\ldots}}  \binom{m}{n_1,\ldots,n_k}  \left( \sum_{j_1,\ldots,j_k}\prod_{i=1}^k (P_{j_i}\delta_{\tau_{j_i}}(B))^{n_i} \right)   |\Neps \right) \right)\\
 &=  \E\left(  \sum_{k=1}^m \uno_{\{1,\ldots, \Neps^* \}} (k) \frac{1}{k!} \sum_{\substack{n_1+\cdots+n_k=m\\ n_j=1,2,\ldots}}  \binom{m}{n_1,\ldots,n_k}  
 \sum_{j_1,\ldots,j_k} \E(\prod_{i=1}^k P_{j_i} ^{n_i}|\Neps) \prod_{i=1}^k \E(\delta_{\tau_j}(B)|\Neps)
 \right)\\
  \end{align*}
 \begin{equation*}
= \E\left(  \sum_{k=1}^m \uno_{\{1,\ldots, \Neps^* \}} (k) \frac{1}{k!} \sum_{\substack{n_1+\cdots+n_k=m\\ n_j=1,2,\ldots}}  \binom{m}{n_1,\ldots,n_k}  p_{\varepsilon}(n_1,\ldots,n_k) (P_0(B))^k\right).
\end{equation*}
We identify this last expression as 
\begin{equation*}
 \E\left(  \sum_{k=1}^m P_0(B)^k \Prob(K_m=k|\Neps)  \right),
\end{equation*}
where $K_m$ is the number of distinct values in a sample of size $m$ from $\Peps$.
Hence, we have proved that 
\begin{align*}
\E(\Peps(B)^m )  &= \E \left( \E( P_0(B)^{K_m} |\Neps)\right) = \E \left( P_0(B) ^{K_m}\right). 
\end{align*}
In particular, when $m=2$, $K_m$ assumes value in $\{1,2 \}$, and the probability that $K_2=1$ is the probability that, in a sample of size 2 from $\Peps$, the samples values coincide, i.e. $p_\varepsilon(2)$. 
Therefore 
$$ \E(\Peps(B)^2) = P_0(B)p_\varepsilon(2) + (P_0(B))^2(1-p_\varepsilon(2)), $$
and consequently 
\begin{align}
\label{eq:var_peps}
\Var(\Peps(B))= P_0(B)p_\varepsilon(2) + P_0(B)^2(1-p_\varepsilon(2)) +  P_0(B)^2
= p_\varepsilon(2) P_0(B) \left( 1-P_0(B)\right). 
\end{align}   
Analogously, suppose that  $B_1,B_2\in\calB(\Theta)$ are disjoint. Therefore
\begin{align*}
\E(\Peps(B_1)\Peps(B_2)) &= \E\left( \E\left (\sum_{j=1}^{\Neps^*} P_j\delta_{\tau_j}(B_1) \sum_{l=1}^{\Neps^*} P_l\delta_{\tau_l}(B_2) |\Neps\right) \right) \\
&=   \E\left( \E \left(\sum_{j=1}  ^{\Neps^*} P_j^2 \delta_{\tau_j}(B_1\cap B_2)  + \sum_{\substack{l\neq j\\ j,l=1,\ldots,\Neps^*}} P_j P_l \delta_{\tau_j}(B_1)\delta_{\tau_l}(B_2)
 |\Neps\right) \right)\\
&=    \E\left( \sum_{\substack{l\neq j\\ j,l=1,\ldots,\Neps^*}} \E(P_j P_l|\Neps)  \E(\delta_{\tau_j}(B_1))\E(\delta_{\tau_l}(B_2)))  \right) \\
&= \E\left( P_0(B_1) P_0(B_2)  \sum_{\substack{l\neq j\\ j,l=1,\ldots,\Neps^*}} \E(P_j P_l|\Neps) \right) 
=  P_0(B_1) P_0(B_2) p_\varepsilon(1,1).
\end{align*}
The general case when $B_1$ and $B_2$ are not disjoint follows easily: 
\begin{align*}
\E(\Peps(B_1)\Peps(B_2)) &= \E\left( (\Peps(B_1\cap B_2))^2 \right)+ 
\E \left( \Peps(B_1\setminus B_2) \Peps(B_1\cap B_2) \right) \\
&\quad+ \E \left( \Peps(B_2\setminus B_1) \Peps(B_1\cap B_2) \right) +\E(\Peps(B_1\setminus B_2) \Peps(B_2\setminus B_1) ),
\end{align*} 
where now the sets are disjoint. Applying the result above we first find that
\begin{equation*}
\E(\Peps(B_1)\Peps(B_2)) = p_\varepsilon(2) P_0(B_1\cap B_2)+ (1- p_\varepsilon(2)) P_0(B_1) P_0(B_2),
\end{equation*}
and consequently:
\begin{equation*}
\Cov((\Peps(B_1),\Peps(B_2)) = p_\varepsilon(2)  \left(  P_0(B_1\cap B_2) - P_0(B_1) P_0(B_2) \right). 
\end{equation*}


\section{$\varepsilon$-NormCRM process mixtures}
\label{sec:epsHNRMImixtures}
Among the wide range of applications in which discrete random probability measures are exploited,
hierarchical mixture models, dating back to \cite{Lo84}, are frequently used when dealing with various data structures.
Hence, as argued in the Introduction, their role is becoming more and more central in modern Bayesian Nonparametrics. 
We consider mixtures of parametric kernels as the distribution of data, where the mixing measure is the 
$\varepsilon$-NormCRM$(\rho, \kappa P_0)$. 
The model we assume is the following: 
\begin{equation}
\begin{split}
  \label{eq:modello}
  Y_i|\theta_i  &\ind f(\cdot;\theta_i),  \  i=1,\ldots, n \\
  \theta_i|\Peps  &\iid P_{\varepsilon}, \  i=1,\ldots, n \\
 P_\varepsilon &\sim \varepsilon-NormCRM(\rho, \kappa P_0),  \\
\varepsilon &\sim \pi(\varepsilon), 
\end{split}
\end{equation} 
where $f(\cdot;\theta_i)$ is a parametric family of densities on $\mathbb{Y}\subset\Rea^p$, for all $\theta\in\Theta \subset \Rea^m$.  
Remember that
$P_0$ is a non-atomic probability measure on $\Theta$, such that 
$\E(\Peps(A))=P_0(A)$ for all $A\in\calB(\Theta)$ and all $\varepsilon\geq 0$.
Model \eqref{eq:modello} will be addressed here as 
$\varepsilon-$\textit{NormCRM hierarchical mixture} model. It is well known that this model is equivalent to assume that the $Y_i$'s, conditionally on $P_\varepsilon$, are independently distributed according to the random density 
\begin{equation*}
h(y)=\int_\Theta f(y;\theta)P_\varepsilon(d \theta) = \sum_{j=0}^{N_\varepsilon} P_j \  f(y;\tau_j).
\end{equation*}
In particular, we are able to build a blocked Gibbs sampler to
update blocks of parameters, which are drawn from  multivariate distributions.

The parameter is $(\Peps,\varepsilon,{\bm\theta})$, 
but we use the augmentation trick prescribed by the posterior characterization in Proposition \ref{prop:Posterior}, 
so that the new parameter is  $(\Peps,\varepsilon,{\bm\theta}, u)$; 
the joint law of data and parameters can be written as follows: 
\begin{equation}\label{eq:leggecongiuntamodelloAPP}
 \begin{split}
\mathcal{L}(\boldsymbol{Y}, \boldsymbol{\theta}, u, P_\varepsilon, \varepsilon)
&=\mathcal{L}(\boldsymbol{Y}| \boldsymbol{\theta}, u, P_\varepsilon, \varepsilon)
\mathcal{L}(\boldsymbol{\theta}, u, P_\varepsilon| \varepsilon)
\mathcal{L}(\varepsilon)
=\prod_{i=1}^{n}f(Y_i; \theta_i)\mathcal{L}(\boldsymbol{\theta}, u, P_\varepsilon| \varepsilon)
\pi(\varepsilon)\\
&=\frac{u^{n-1}}{\Gamma(n)} \prod_{j=0}^{N_\varepsilon} 
\big( e^{-u J_j}\rho_\varepsilon(J_j)P_0(\tau_j) \big)\sum_{l^*_1, .., l^*_k}^{} \big(  J_{l^*_1}^{n_1}
   \prod_{i \in C_1}^{}f(Y_i;\theta^*_1)\delta_{\tau_{l^*_1}}(\theta^*_1)\times\\
&   \qquad\qquad \qquad\qquad \cdots\times J_{l^*_k}^{n_k}\prod_{i \in C_k}^{}f(Y_i;\theta^*_k) \delta_{\tau_{l^*_k}}(\theta^*_k) \big)
  \dfrac{\Lambda_\varepsilon^{N_\varepsilon}e^{-\Lambda_\varepsilon}}{N_\varepsilon!}\pi(\varepsilon),\\
  \end{split}
\end{equation}
where we used the hierarchical structure in $\eqref{eq:modello}$. 
The Gibbs sampler generalizes that one provided in \cite{peps} for $\varepsilon$-NGG mixtures.  
Description of the  full-conditionals is below, and further details can be found in the Appendix. 
\begin{enumerate}
\item[1.] \textbf{Sampling from $\boldsymbol\LL(u|{\bm Y},{\bm\theta},\Peps,\varepsilon)$}: 
from $\eqref{eq:leggecongiuntamodelloAPP}$ it is easy to see that the factors depending on $u$ identify this full-conditional as gamma with parameters $(n,T_\varepsilon)$,  like the corresponding prior. 
\item[2.] \textbf{Sampling from 
  $\boldsymbol\LL({\bm\theta}|u, {\bm Y},\Peps,\varepsilon)$}: 
each $\theta_i$, for $i=1,\dots,n$, has  discrete law with support 
  $\{\tau_0,\tau_1,\dots,\tau_{\Neps} \}$, and probabilities 
  $\mathbb{P}(\theta_i=\tau_j) \propto J_j f(Y_i; \tau_j)$.
\item[3.] \textbf{Sampling from $
\boldsymbol\LL(\Peps,\varepsilon|u,{\bm\theta}, {\bm Y})$}: 
this step is not 
straightforward and can be split into two consecutive substeps:
\begin{enumerate}
  \item[3.a] \textbf{Sampling from $\boldsymbol{\mathcal{L}}(\varepsilon|u,
    \boldsymbol\theta, \boldsymbol{Y})$}: see the Appendix.  
   \item[3.b] \textbf{Sampling from  $\boldsymbol{\mathcal{L}}(P_{\varepsilon}|\varepsilon, u, \boldsymbol\theta,
\boldsymbol{Y})$}: via characterization of 
 the posterior in Proposition~\ref{prop:Posterior}, since this distribution is equal to
 $\mathcal{L}(P_{\varepsilon}|\varepsilon,u, \boldsymbol\theta)$.
 To put into practice, we have to sample  $(i)$ the number $N_{{na}}$ of 
 \emph{non-allocated} jumps, $(ii)$  the vector 
 of the unnormalized  \emph{non-allocated} jumps $\bm{J}^{(na)}$, 
 $(iii)$  the vector 
 of the unnormalized \emph{allocated} jumps $\bm{J}^{(a)}$,
  the support of the \emph{allocated} $(iv)$ and 
\emph{non-allocated} $(v)$ jumps. See the Appendix for a wider description. 
\end{enumerate}
\end{enumerate}
Remember that, when sampling from non-standard
distributions, Accept-Reject or Metropolis-Hastings
algorithms have been exploited.

\section{Normalized Bessel random measure mixtures: an application to density estimation}
\label{sec:bessel}

In this section 
we introduce a new normalized process, called normalized Bessel random measure, 
corresponding to a specific choice for 
the intensity function $\rho(\cdot)$. Section~\ref{sec:bessel_def} describes theoretical results: in particular, 
we show that this family encompasses the well-known Dirichlet process.
Then we fit the mixture model to  synthetic and 
real datasets in Section~\ref{sec:Bessel_application}. Results are illustrated through a density estimation problem.

\subsection{Definition}\label{sec:bessel_def}
Let us consider a  normalized completely random measure corresponding to mean intensity 
\begin{equation}
\rho(s; \omega)= \frac{1}{s}\e^{-\omega s} I_0(s), \quad s>0,
\end{equation}
where $\omega\geq 1$ and 
\begin{equation}
I_\nu(s)= \sum_{m=0}^{+\infty}\frac{(s/2)^{2m+\nu}}{m!\Gamma(\nu+m+1)}
\end{equation}
is the modified Bessel function of order $\nu>0$ \citep[see][Sect 7.2.2]{erdelyi53}. It is straightforward to see that, for $s>0$, 
\begin{equation}
\label{eq:2super}
\rho(s;\omega)=  \frac{1}{s}\e^{-\omega s} + \sum_{m=1}^{+\infty}\frac{1}{2^{2m}(m!)^2}s^{2m-1} \e^{-\omega s},
\end{equation}
so that $\rho$ is the sum of the L\'evy intensity of the gamma process with rate parameter $\omega$  and of the L\'evy intensities 
\begin{equation}
\label{eq:bessel_rhom}
\rho_m(s;\omega)= \frac{1}{2^{2m}(m!)^2}s^{2m-1} \e^{-\omega s}, \quad s>0, \qquad m=1,2,\ldots
\end{equation}
corresponding to  finite activity Poisson processes. It is simple to check that 
\eqref{eq:regolarita} holds. Hence, following  \eqref{eq:NRMI} in  Section~\ref{sec:homogen}, we introduce the \emph{normalized Bessel random measure} $P$, with parameters $(\omega,\kappa)$,  where $\omega\geq 1$ and $\kappa>0$. 
Thanks to \eqref{eq:2super} and the Superposition Property of Poisson processes, in this case, the total mass $T$ in \eqref{eq:NRMI}  can be written as 
\begin{equation}
\label{eq:bessel_decomp}
 T\stackrel{d}= T_G +\sum_{m=1}^{+\infty}T_m,
\end{equation}
where $ T_G$, $T_1$, $T_2,\ldots$ are independent random variables, $T_G$ being the total mass of the gamma process and $T_m$ the total mass of a completely random measure corresponding to the  intensity 
$\nu_m(ds,d\tau)=\rho_m(s)ds \kappa P_0(d\tau)$. In particular, $T_G\sim gamma(\kappa,\omega)$, while $T_m=\sum_ {j=0}^{N_m} J_j^{(m)}$, where $N_m\sim Poi	(\kappa \Gamma(2m)/( (2\omega)^{2m} (m!)^2))$, 
 and $ \{ J_j^{(m)} \}$ are the points of a Poisson process on $\Rea^+$ with intensity $\kappa \rho_m$. By this notation we mean that $T_m$ is equal to 0 when $N_m=0$, while, conditionally to $N_m>0$,
 $J_j^{(m)}\iid gamma(2m,\omega)$.  
We can write down the density function of $T$, via \eqref{eq:LaplaceTransf}:
\begin{align*}
\psi(\lambda) &:= -\log \left(\E(\e^{-\lambda T})\right) = \kappa \int_0^{+\infty}(1-\e^{-\lambda s}) \rho(s;\omega)ds\\
& = \kappa \left(  \int_0^{+\infty} (1-\e^{-\lambda s})  \frac{1}{s}\e^{-\omega s}ds + \sum_{m=1}^{+\infty}\frac{1}{2^{2m}(m!)^2} \int_0^{+\infty}  (1-\e^{-\lambda s})  
s^{2m-1}\e^{-\omega s}ds \right)\\
&= \kappa \left( \log \left( \frac{\omega+\lambda}{\omega}\right) +\sum_{m=1}^{+\infty}\frac{\Gamma(2m)}{2^{2m}(m!)^2 \omega^m} - \sum_{m=1}^{+\infty}\frac{\Gamma(2m)}{2^{2m}(m!)^2 (\omega+\lambda)^m}
\right)\\
&=  \kappa \left( \log \left( \frac{\omega+\lambda}{\omega}\right) - \log \left( \frac12+\frac12\sqrt{1-\frac{1}{\omega^2}}\right) +   \log \left( \frac12+\frac12\sqrt{1-\frac{1}{(\omega+\lambda)^2}}\right) \right) \\
&= \kappa \log \left( \frac{\omega+\lambda+\sqrt{(\omega+\lambda)^2-1}}{\omega+\sqrt{\omega^2-1}} \right).
\end{align*}
The same expression is obtained when $T\sim f_T(t)=\kappa (\omega+\sqrt{\omega^2-1})^\kappa 
\dfrac{\e^{-\omega t}}{t}I_{\kappa}(t)$, $t>0$ \citep[see][formula (17.13.112)]{GraRyz07}.  Observe that, when $\omega=1$, $f_T$ is called \textit{Bessel function density}
\citep{feller1971}. By \eqref{eq:eppf_hnrmi}, the eppf of the normalized Bessel random measure is: 
\begin{equation}
\label{eq:eppf_Bessel}
\begin{split}
p_B(n_1,\ldots,n_k; \omega,\kappa) &= \kappa^k \int_0^{+\infty} \frac{u^{n-1}}{\Gamma(n)} \left( \frac{\omega+\sqrt{\omega^2-1}} {\omega+u+\sqrt{(\omega+u)^2-1}} \right)^\kappa
\frac{1}{(u+\omega)^n}\\
&\qquad\qquad\qquad \times \prod_{j=1}^k\Gamma(n_j) \   \ff \left(\frac{n_j}{2}, \frac{n_j+1}{2};1; \frac{1}{(u+\omega)^2}\right) du,  
\end{split}
\end{equation}
where
$$ \ff(\alpha_1,\alpha_2;\gamma;z) :=  \sum_{m=0}^{\infty}
\frac{  \left( \alpha_1\right)_m
\left( \alpha_2 \right)_m
}{\left( \gamma\right)_m}
\frac{1}{m!} \left( z\right)^m, \quad\textrm{ with }  (\alpha)_m :=\frac{\Gamma(\alpha+m)}{\Gamma(\alpha)} $$ is the hypergeometric series \citep[see][formula (9.100)]{GraRyz07}.

The following proposition shows that the eppf of the normalized Bessel random measure converges to the eppf of the Dirichlet process as the parameter $\omega$ increases. 
\begin{proposition}
Let  $(n_1,\dots,n_k)$ be a vector of positive integers such that $\sum_{i=1}^kn_i=n$, where $k=1,\ldots,n$. Then, the eppf  \eqref{eq:eppf_Bessel}, associated with the normalized Bessel random measure $P$ with parameter $(\omega,\kappa)$, $\omega\geq 1$, $\kappa>0$,  and mean measure $P_0$, is such that 
\begin{equation*}
\lim_{\omega\rightarrow +\infty}p_B(n_1,\ldots,n_k;\omega,\kappa) = p_{D}(n_1,\dots,n_k;\kappa), 
\end{equation*}
where $p_{D}(n_1,\dots,n_k;\kappa)$ is  the eppf of the Dirichlet process with measure parameter $\kappa P_0$.
\end{proposition}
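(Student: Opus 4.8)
The plan is to work directly with the explicit integral representation \eqref{eq:eppf_Bessel} of $p_B$ and to show that, after a suitable rescaling of the integration variable, the integrand converges to the integrand of a Beta integral whose value reproduces the Ewens sampling formula. The crucial observation is that as $\omega\to+\infty$ the mass of the integrand in \eqref{eq:eppf_Bessel} escapes to infinity: the factor $(u+\omega)^{-n}$ forces pointwise vanishing at every fixed $u$, so a naive termwise limit is useless. Instead I would substitute $u=\omega t$, $du=\omega\,dt$, which keeps the relevant scale of the integrand fixed. Under this substitution $u+\omega=\omega(1+t)$, and the $\omega$-dependent algebraic factors become $u^{n-1}=\omega^{n-1}t^{n-1}$, $(u+\omega)^{-n}=\omega^{-n}(1+t)^{-n}$, together with the Jacobian $\omega$; these powers of $\omega$ cancel exactly, since $\omega^{n-1}\cdot\omega^{-n}\cdot\omega=1$, which is precisely why the rescaling $u=\omega t$ is the right one.

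It then remains to pass to the limit in the three surviving pieces. Writing $x+\sqrt{x^2-1}=x\bigl(1+\sqrt{1-x^{-2}}\bigr)$ one obtains
\[
\left(\frac{\omega+\sqrt{\omega^2-1}}{\omega(1+t)+\sqrt{\omega^2(1+t)^2-1}}\right)^{\!\kappa}\longrightarrow\frac{1}{(1+t)^{\kappa}},
\]
while each hypergeometric factor satisfies ${}_2F_1\bigl(n_j/2,(n_j+1)/2;1;(u+\omega)^{-2}\bigr)\to {}_2F_1(\cdot,\cdot;1;0)=1$, because its argument $(u+\omega)^{-2}=\omega^{-2}(1+t)^{-2}$ tends to $0$. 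Hence the rescaled integrand converges pointwise to $\kappa^{k}\,\Gamma(n)^{-1}\prod_{j=1}^{k}\Gamma(n_j)\,t^{n-1}(1+t)^{-(\kappa+n)}$. To exchange limit and integral I would invoke dominated convergence: the elementary bound $x\le x+\sqrt{x^2-1}\le 2x$ for $x\ge1$ controls the ratio term by $\bigl(2/(1+t)\bigr)^{\kappa}$, and since the argument $(u+\omega)^{-2}$ is uniformly at most $\omega^{-2}$, the monotonicity of ${}_2F_1$ in its nonnegative argument (all series coefficients are positive) bounds each hypergeometric factor by a finite constant for $\omega\ge 2$. This produces a dominating function of the form $C\,t^{n-1}(1+t)^{-(\kappa+n)}$, which is integrable on $(0,+\infty)$.

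Passing to the limit and evaluating the resulting Beta integral $\int_0^{+\infty}t^{n-1}(1+t)^{-(\kappa+n)}\,dt=B(n,\kappa)=\Gamma(n)\Gamma(\kappa)/\Gamma(n+\kappa)$ yields
\[
\lim_{\omega\to+\infty}p_B(n_1,\dots,n_k;\omega,\kappa)=\frac{\kappa^{k}\,\Gamma(\kappa)}{\Gamma(n+\kappa)}\prod_{j=1}^{k}\Gamma(n_j),
\]
which is exactly the Ewens sampling formula, i.e.\ the eppf $p_D(n_1,\dots,n_k;\kappa)$ of the Dirichlet process with mass parameter $\kappa$ (note that the limit no longer depends on $\omega$, consistently with the fact that normalizing a gamma process gives a Dirichlet process irrespective of its rate). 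The only delicate step is the clean justification of the dominating bound, in particular the uniform control of the hypergeometric factors; once that is in place, the argument reduces to the rescaling $u=\omega t$ and a standard Beta integral.
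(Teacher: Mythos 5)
Your proof is correct, but it takes a genuinely different route from the paper's. The paper never changes variables: it keeps the integral representation of $p_B$ in the variable $u$ and squeezes it against the $\omega$-dependent integral representation of the Dirichlet eppf, $p_D=\kappa^k\int_0^{+\infty}\frac{u^{n-1}}{\Gamma(n)}\bigl(\frac{\omega}{\omega+u}\bigr)^\kappa(u+\omega)^{-n}\prod_j\Gamma(n_j)\,du$, whose value is $\omega$-free. Writing $\frac{\omega+\sqrt{\omega^2-1}}{(u+\omega)+\sqrt{(u+\omega)^2-1}}=\frac{\omega}{u+\omega}\cdot\frac{1+\sqrt{1-1/\omega^2}}{1+\sqrt{1-1/(u+\omega)^2}}$ and using $1\le{}_2F_1(\cdot;1/(u+\omega)^2)\le{}_2F_1(\cdot;1/\omega^2)$, the paper obtains the two-sided bound $\bigl(\tfrac{1+\sqrt{1-1/\omega^2}}{2}\bigr)^\kappa p_D\le p_B\le\prod_j{}_2F_1\bigl(\tfrac{n_j}{2},\tfrac{n_j+1}{2};1;\tfrac{1}{\omega^2}\bigr)p_D$, and both envelopes converge to $p_D$; no interchange of limit and integral is ever needed. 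You instead rescale $u=\omega t$ (correctly observing that the powers of $\omega$ cancel and that a naive pointwise limit in $u$ gives $0$ because the mass escapes to infinity), prove pointwise convergence of the rescaled integrand to $\kappa^k\Gamma(n)^{-1}\prod_j\Gamma(n_j)\,t^{n-1}(1+t)^{-(\kappa+n)}$, and close with dominated convergence and the Beta integral $B(n,\kappa)$. Both arguments lean on the same two structural facts (the ratio behaves like $\omega/(u+\omega)$ up to factors tending to $1$, and the hypergeometric factors are monotone in their argument and tend to $1$), but the paper's sandwich is slightly slicker in that it bypasses dominated convergence entirely, while yours is more self-contained and makes the limiting Beta integral, hence the Ewens formula, appear explicitly. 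Your domination is sound: the bound $x\le x+\sqrt{x^2-1}\le 2x$ gives $(2/(1+t))^\kappa$ uniformly in $\omega$, and for $\omega\ge 2$ each ${}_2F_1$ factor is bounded by its value at $1/4$ by positivity of the series coefficients, so the dominating function $C\,t^{n-1}(1+t)^{-(\kappa+n)}$ is integrable precisely because $\kappa>0$.
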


\begin{proof}
The eppf of the Dirichlet process appeared first in \cite{Antoniak74} \citep[see][]{Pitman96};
anyhow, it is straightforward to derive it from \eqref{eq:eppf_hnrmi}:
\begin{equation}
\label{eq:eppf_DP}
\begin{split}
&p_{D}(n_1,\dots,n_k;\kappa) =   \int_{0}^{+\infty} \frac{u^{n-1}}{\Gamma(n)} \e^{-\kappa \log\frac{u+\omega}{\omega}}  \prod_{j=1}^{k}\kappa\frac{\Gamma(n_j)}{(u+\omega)^{n_j}}du\\
&\quad\quad =\kappa^k \int_{0}^{+\infty} \frac{u^{n-1}}{\Gamma(n)} \left( \frac{\omega}{\omega+u} \right)^\kappa \frac{1}{(u+\omega)^n} \prod_{j=1}^{k}\Gamma(n_j)du  
 = \frac{\Gamma(\kappa)}{\Gamma(\kappa+n)}\kappa^k \prod_{j=1}^k\Gamma(n_j)
\end{split}
\end{equation}
where the last equality follows from formula (3.194.3) in  \cite{GraRyz07}.
By  definition of the hypergeometric function, we have
$$1 \le \ff\left( \frac{n_j}{2},\frac{n_j+1}{2};1;\frac{1}{(u+\omega)^2} \right)\le
\ff\left( \frac{n_j}{2},\frac{n_j+1}{2};1;\frac{1}{\omega^2} \right) \ . $$ 
Moreover 
\begin{equation*} 
  \frac{\omega+\sqrt{\omega^2-1}}{  (u+\omega)+\sqrt((u+\omega)^2-1)} =
  \frac{\omega}{u+\omega}\frac{1+\sqrt{1-1/\omega^2}}{1+\sqrt{1-1/(u+\omega)^2}}
\end{equation*}
and 
\begin{equation*}
\frac{1+\sqrt{1-1/\omega^2}}{2}  \le 
\frac{1+\sqrt{1-1/\omega^2}}{1+\sqrt{1-1/(u+\omega)^2}} \le 1,
\end{equation*}
so that 
\begin{align*}
  \left( \frac{1+\sqrt{1-1/\omega^2}}{2} \right)^\kappa p_{D}(n_1,\dots,n_k;\kappa)
 & \le p_{B}(n_1,\dots,n_k;\omega,\kappa)\\
&\le
  \prod_{j=1}^{k}\ff
\left( \frac{n_j}{2},\frac{n_j+1}{2};1;\frac{1}{\omega^2}\right)
p_{D}(n_1,\dots,n_k;\kappa). 
\label{eq:disuguaglianza}
\end{align*}
The left hand-side of these inequalities obviously converges to $p_{D}(n_1,\dots,n_k;\kappa)$ as $\omega$ goes to $+\infty$. On the other hand, 
$$\ff\left( \frac{n_j}{2},\frac{n_j+1}{2};1;\frac{1}{\omega^2}\right)\rt 1 \textrm{ as } \omega\rt +\infty,$$
thanks to the uniform convergence of the hypergeometric series $\ff( \frac{n_j}{2},\frac{n_j+1}{2};1;z)$ on a disk of radius smaller that 1.
We conclude that, for any $n_1,\ldots,n_k$ such that $n_1+\cdots+n_k=n$, $k=1,\ldots,n$, and any $\kappa>0$,
$$\lim_{\omega\rightarrow +\infty}p_B(n_1,\ldots,n_k;\omega,\kappa) = p_{D}(n_1,\dots,n_k;\kappa).$$ 
\end{proof}

Since the eppf is the joint distribution of the number $K_n$ of distinct values and corresponding sizes $N_1$,\dots,$N_k$ (see equation (30) in \cite{Pitman96})
in a sample of size $n$ from the normalized Bessel completely random measure, 
by marginalization we obtain 
\begin{equation*}
  P(K_n=k)=\frac{1}{k!} \sum_{n_1,\dots,n_k}^{} {n \choose{n_1,\dots,n_k}}
  p_B(n_1,\dots,n_k), \quad k=1,\ldots,n, 
\end{equation*}
where the sum is over all the compositions of $n$ into $k$ part, i.e., all positive integers such that $n_1+\dots+n_k=n$.
Unfortunately,  we were not able to simplify further this last expression,  because
of the summation of the hypergeometric functions $\ff$ occurring in the analytic expression  \eqref{eq:eppf_Bessel} of $p_B$. 
Since the number of partitions of $n$ items in $k$ blocks can be very high (it is given by the Stirling number $S(n;k)$ of the second kind) 
and the evaluation of $\ff$ computationally heavy, we prefer to use a Monte Carlo strategy to simulate from the prior of $K_n$. 
The simulation strategy is also useful to understanding the meaning of the parameters of the normalized Bessel random measure: $\kappa$ has the usual interpretation of the mass parameter, since,  when fixing $\omega$,
$\mathbb{E}(K_n)$ increases with $\kappa$.
On the other hand, the effect of $\omega$ is quite peculiar: 
decreasing $\omega$ (thus drifting apart from the Dirichlet process), with $\kappa$ fixed, the prior distribution of $K_n$ shifts towards smaller values. 
However, when $\mathbb{E}(K_n)$ is kept fixed, the distribution has heavier tails if $\omega$ is small (see Figures 
\ref{fig:prior_NB} and \ref{fig:simulated} (a)).

\begin{figure}[h!]
\centering {\includegraphics[width=0.5\textwidth,height=0.5\textwidth]{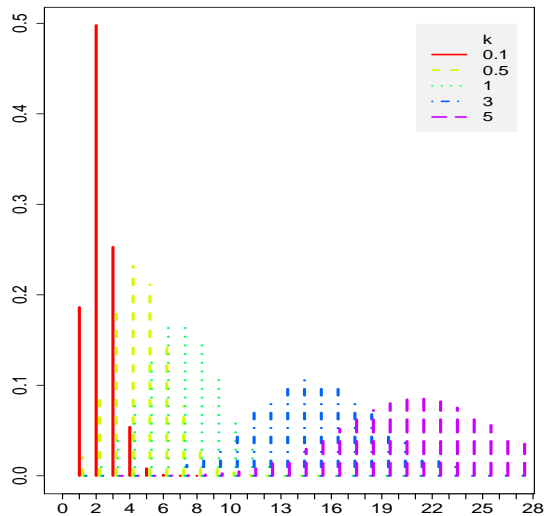}} 
\caption{Prior distribution of $K_n$ under a sample from $\varepsilon$-NB process with  $\varepsilon=10^{-6}$, 
$\omega=1.05$ and several values for $\kappa$, as reported in the legend.} 
\label{fig:prior_NB}
\end{figure}

\subsection{Application}
\label{sec:Bessel_application}

In this section let us consider the hierarchical mixture model \eqref{eq:modello}, where the mixing measure is $\Peps$, 
the $\varepsilon$-approximation of the normalized Bessel random measure, as
introduced above (here $\varepsilon$-NB($\omega$, $\kappa P_0$) mixture
model).  Of course, when $\varepsilon$ is small, this model approximates the
corresponding mixture when the mixing measure is $P$; to the best of our
knowledge, this normalized Bessel completely random measure has never been considered in the Bayesian nonparametric literature. 
By  decomposition  \eqref{eq:bessel_decomp},
we argue that this model is suitable when the unknown density shows many different components, where a few of them are very spiky (they should correspond to Levy intensities \eqref{eq:bessel_rhom}),  while  there is a folk of flatter components which are explained by the intensity $(1/s)\e^{-\omega s}$ of the Gamma process.
For this reason, we consider a simulated dataset 
 which is a sample from a mixture of 5 Gaussian distributions with means and standard deviations equal to 
$\{(15,1.1), (50,1), (20,4), (30,5), (40, 5) \}$, and weights proportional to $\left\{10, 9, 4, 5, 5\right\}$.
The histogram of the simulated data, for $n = 1000$, is reported in Figure~\ref{fig:sim_dens_est}.

We report posterior estimates for different sets of hyperparameters of the $\varepsilon$-NB mixture model when 
$f(\cdot; \theta)$ is the Gaussian density on $\mathbb{R}$ and $\theta = (\mu, \sigma^2)$ stands for its mean and variance.
Moreover, $P_0(d\mu, d\sigma^2)=\mathcal{N}(d\mu; \bar{y}_n, \sigma^2/\kappa_0)\times inv-gamma(d\sigma^2; a, b)$;
here $\mathcal{N}(\bar{y}_n, \sigma^2/\kappa_0)$ is the Gaussian distribution with mean $\bar{y}_n$(the empirical mean) 
and variance $\sigma^2/\kappa_0$, and $inv-gamma(d\sigma^2; a, b)$ is the inverse-gamma distribution with mean 
$b/(a-1)$ (if $a>1$). We set $\kappa_0=0.01$, $a=2$ and $b=1$ as proposed first in \cite{EscWes95}.
We shed light on three sets of hyperparameters in order to understand sensitivity of the estimates under different conditions of variability; indeed, each set has a different value of $p_\varepsilon(2)$, which tunes the a-priori variance of $\Peps$, as reported in 
\eqref{eq:var_peps}. We tested three different values for $p_\varepsilon(2)$:
$p_\varepsilon(2)=0.9$ in  set $(A)$, $p_\varepsilon(2)=0.5$ in set $(B)$ and  $p_\varepsilon(2)=0.1$  in set $(C)$. 
Moreover, in each scenario we let the parameter $1/\omega$ ranges in $\{0.01, 0.25, 0.5, 0.75, 0.95  \}$; note that the extreme case of 
$\omega=100$ (or equivalently $1/\omega=0.01$)  corresponds to an approximation of the  DPM model.
The mass parameter $\kappa$ is then fixed to achieve the desired level  of $p_\varepsilon(2)$. 
As far as the choice of $\varepsilon$ concerns, we set it equal to $10^{-6}$: 
at the end, we got 15 tests, listed in Table~\ref{tab:indexes}.
It is worth mentioning that it is possible to 
choose
a prior for $\varepsilon$,
even if, for the $\rho$ in \eqref{eq:2super}, 
the computational cost would greatly increase due to the evaluation of functions $\ff$ in \eqref{eq:eppf_Bessel}.

We have implemented our Gibbs sampler in C++.
All the tests in Sections~\ref{sec:bessel} and \ref{sec:LDNGG} were made on a laptop with Intel Core i7 2670QM processor, with 6GB of RAM. 
Every run produced a final sample size of 5000 iterations, after a thinning of 10 and an initial burn-in of 5000 iterations.
Every time the convergence was checked by standard R package CODA tools.
\begin{figure}[h!]
\centering 
\includegraphics[width=0.73\textwidth,height=0.62\textwidth]{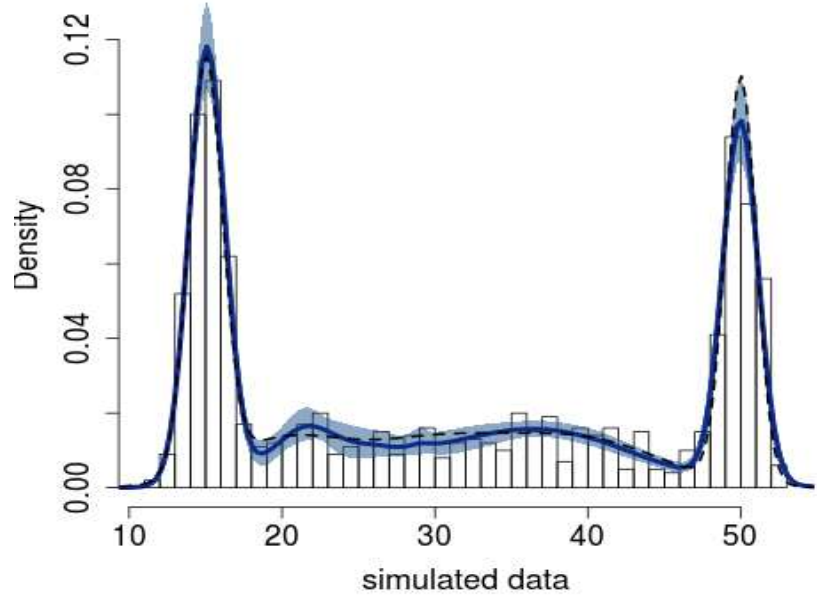}
\caption{Density estimate for case A5: posterior mean (line),  $90\%$
pointwise credibility intervals (shadowed area), true density (dashed) and the histogram of simulated data.}
 \label{fig:sim_dens_est}
\end{figure}
Here, we focus on density estimation: all the tests provide similar estimates, quite faithful to the true density.
\begin{figure}[h!]
\centering 
\subfigure[]%
{\includegraphics[width=0.45\textwidth,height=0.43\textwidth]{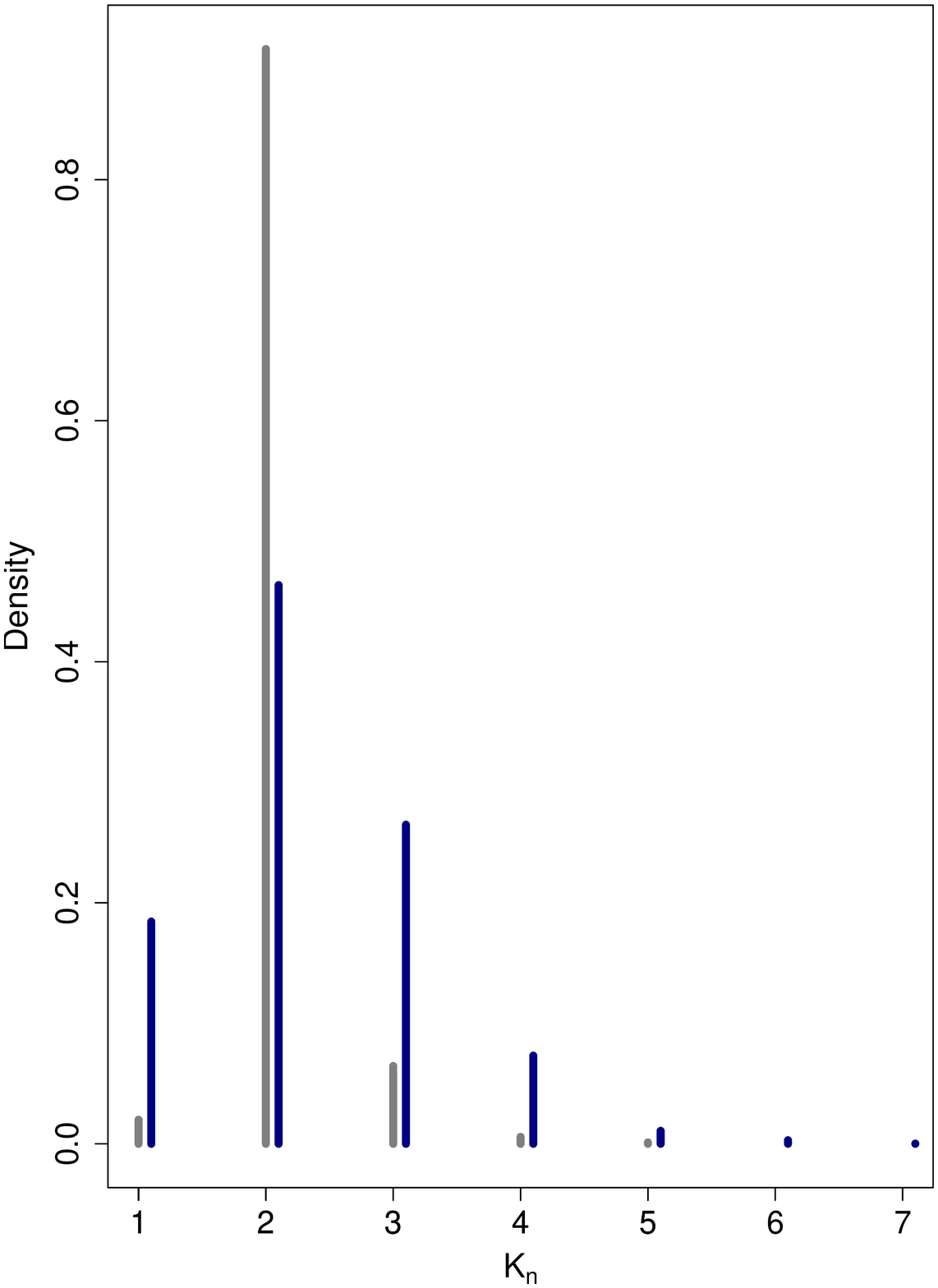}} 
\subfigure[]%
{\includegraphics[width=0.45\textwidth,height=0.43\textwidth]{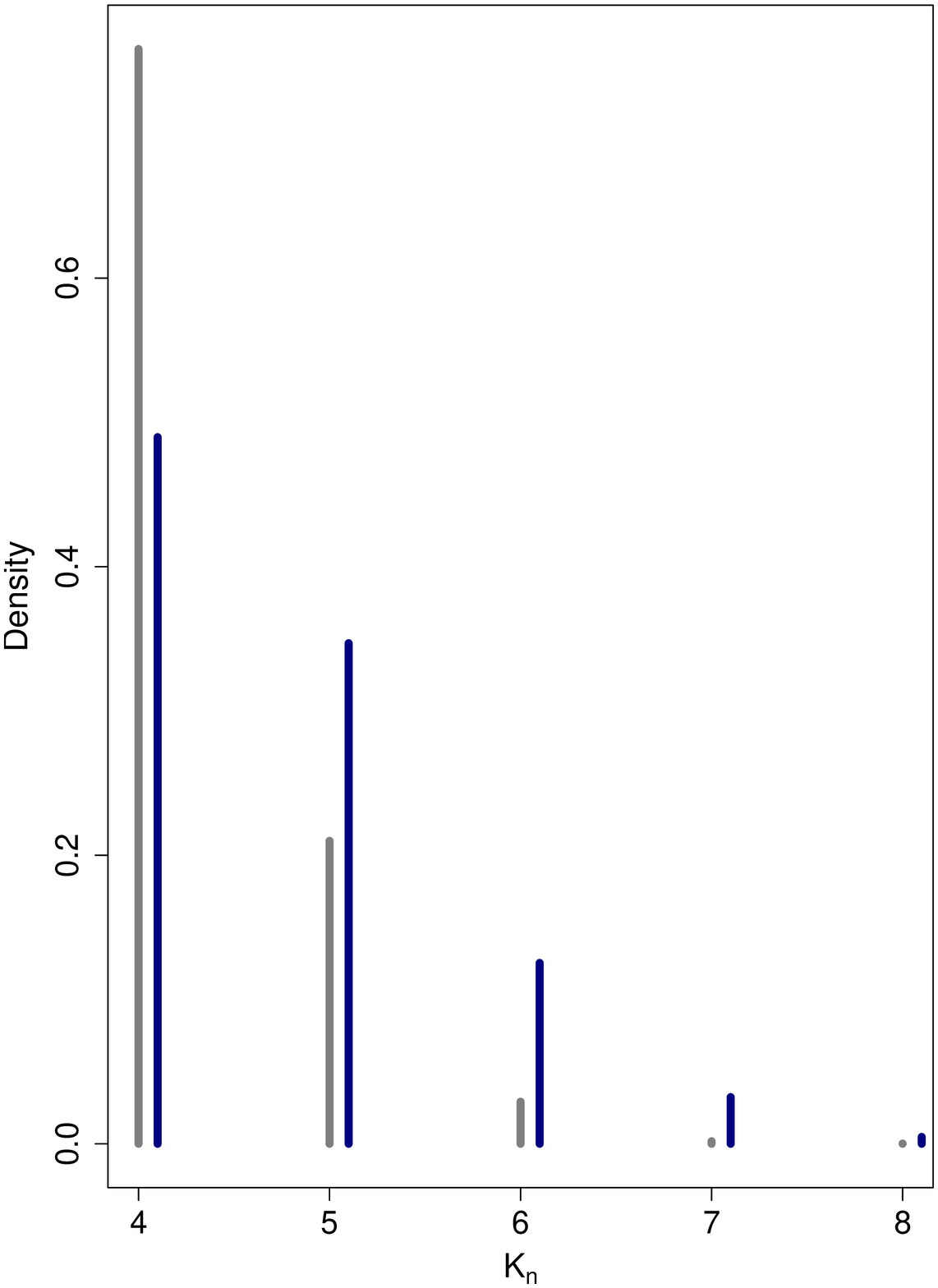}} 
\caption{Prior (a) and posterior (b) distributions of the number $K_n$ of groups for test A1 (gray) and A5 (blue).} 
\label{fig:simulated}
\end{figure}
Figure~\ref{fig:sim_dens_est} shows  density estimate  and  pointwise $90\%$ credibility intervals for case A5; the true density is  superimposed as dashed line. 
Figure~\ref{fig:simulated} (a) and (b) display prior and posterior distributions, respectively,  of the number 
$K_n$ of groups, i.e. the number of unique values among $(\theta_1,\ldots,\theta_n)$ in \eqref{eq:modello} under two sets of hyperparameters, 
A1, representing an approximation of the  DPM model, and  A5, where the parameter $\omega$ is nearly 1. 
From Figure~\ref{fig:simulated} it is clear that A5 is more flexible than A1: for case A5, a priori the variance of $K_n$ is larger, and, on the other hand, the posterior probability mass in 5 (the true value) is larger. 

In order to compare different priors, 
we take into account  five different predictive good\-ness-of-fit indexes:  
$(i)$ the sum of  squared errors (SSE) , i.e. the sum of the squared differences
between the $y_i$ and the predictive mean $\E(Y_i|data)$ (yes, we are using data twice!);
$(ii)$ the sum of standardized absolute errors  (SSAE), given by the sum of the standardized error $|y_i - \E(Y_i|data)|/\sqrt{\Var(Y_i|data)}$;  
$(iii)$ log-pseudo marginal likelihood (LPML), quite standard in the Bayesian literature, defined as 
the sum of $\log(CPO_i)$, where $CPO_i$ is the conditional predictive ordinate of $y_i$, the value of the predictive distribution evaluated at $y_i$, conditioning on the training sample given by all data except $y_i$. The last two indexes, $(iv)$ $WAIC_1$  and $(v)$ $WAIC_2$, as denoted here, were proposed in  \cite{Wat10} and deeply analyzed in \cite{GelHwa14}: they are  generalizations of the 
      AIC, adding two types of penalization, both accounting for the ``effective number of parameters''. 
The bias correction in ${WAIC_1}$ is similar to the bias correction in the definition of the DIC, while   ${WAIC_2}$ is the sum of the posterior variances of the conditional density of the data. See \cite{GelHwa14} for their precise definition.  
Table~\ref{tab:indexes} shows the values of the five indexes for each test: the optimal  (according to each index)  tests  are highlighted in bold 
for the experiments $(A)$, $(B)$ and $(C)$.
It is apparent that  the different tests provide similar values of the indexes, but SSE, indicating that, from a predictive viewpoint, 
there are no significant differences among the priors. 
However, especially when the value of $\kappa$ is small, i.e. in all tests $A$ and $B$, a model with a smaller $\omega$ tends to outperform the Dirichlet process case (approximately, when $\omega=100$). 
On the other hand, the SSE index shows quite different values among the tests: it is well-known that this is a index favoring complex models and  leading to better results when data are over-fitted. 
Therefore, tests with an higher value of $\kappa$ are 
always preferable according to this criterion. 
\begin{table}
\centering
 \caption{Predictive goodness-of-fit indexes for the simulated dataset.}
   \label{tab:indexes} 
\begin{tabular}{l | cc | ccccc }
  \noalign{\smallskip}\footnotesize
Test & $\omega$ & $\kappa$ & SSE & SSAE & WAIC1 & WAIC2 & LPML  \\
   \noalign{\smallskip}\hline\noalign{\smallskip}
A1 &   100 & 0.06  & 6346.59 & 811.16 &-3312.44 &-3312.55 &-3312.55     \\ 
A2 & 4 &  0.09 &  5812.86 & 810.43 & -3312.33 &-3312.42 &-3312.43  \\ 
A3 &2 & 0.1 & 6089.19 & 810.99  &-3312.38	 &-3312.47 &-3312.48 \\ 
A4 & 1.33 & 0.11 &6498.23 & 811.29  &-3312.54 & -3312.62&-3312.63 \\ 
A5 &1.05 & 0.11& \textbf{5725.18}& \textbf{810.39}  & \textbf{-3312.27} &\textbf{-3312.36} &\textbf{-3312.36}  \\ \hline

B1 &100 & 0.43  & 5184.25 & 809.61  &-3311.95 &-3312 &-3312.01 	 \\ 
B2 &4 &  0.67 & 5125.41 & 809.7  &-3312.19 &-3312.25 &-3312.26	 \\ 
B3 & 2 & 0.81 &4610.39 & 809.42  &-3311.92 & -3311.98 & -3312 	 \\ 
B4 &1.33 & 0.93 & 4246.43 & \textbf{809.07} & \textbf{-3311.75} &\textbf{-3311.83} &\textbf{-3311.84}	 \\ 
B5 &1.05 & 1 & \textbf{4571.09} & 809.08  & -3311.96 &-3312.05 &-3312.06\\ \hline

C1 & 100 & 1.56 &3707.5 & 809.36  &\textbf{ -3311.73} & \textbf{-3311.86} &\textbf{-3311.88} 	\\ 
C2 & 4 &  2.67 &2194.1 & 808.8  &-3312.02 &-3312.23 &-3312.26\\ 
C3 & 2 &  3.64 &1223.86 & 809.28  &-3312.62 &-3312.96 &-3312.99\\ 
C4 &1.33 & 5.29 & 748.85 & 808.7  &-3313.05 & -3313.51 &-3313.54 \\ 
C5 & 1.05 & 8.95&\textbf{685}& \textbf{807.96}  &-3312.9 &-3313.36 &-3313.38  \\ 
\noalign{\smallskip}\hline
\end{tabular}
\end{table}

\bigskip
We fitted our model also to a real dataset, the Hidalgo stamps data of \cite{Wilson83} consisting of $n=485$ measurements of 
stamp thickness in millimeters (here multiplied by $10^3$). 
The stamps have been printed between 1872 and 1874 on different paper types, see data histogram in  
Figure~\ref{fig:stamp}.
This dataset has  been analyzed by different authors in the context of mixture models: see, for instance, 
\cite{IzeSom88}, \cite{McaBle06} and \cite{Nieto13}.

We report posterior inference for the set of hyperparameters which is most in agreement with our prior belief: 
the mean distribution is $P_0(d\mu, d\sigma^2)=\mathcal{N}(d\mu; \bar{y}_n, \sigma^2/\kappa_0)\times inv-gamma(d\sigma^2; a, b)$ as before, 
and $\kappa_0=0.005$, $a=2$ and $b=0.1$. The approximation parameter $\varepsilon$ of the  $\varepsilon$-NB$(\omega,\kappa P_0)$ random measure
is fixed to $10^{-6}$; on the other hand, 
in order to set parameters $\omega$ and $\kappa$, we argue as follows:
$\omega$ ranges in $\{1.05, 5, 10, 1000\}$ and we choose the mass parameter $\kappa$ such that the prior mean of  the number of 
clusters, i.e. $\mathbb{E}(K_n)$, is the desired one. 
As noted in Section~\ref{sec:bessel_def}, a closed form of the prior distribution of $K_n$ is not available, so
we resort to Monte Carlo simulation to estimate it. 
Table~\ref{tab:indexes_stamp} shows the four couples of $(\omega, \kappa)$ yielding $\E(K_n)=7$:
indeed, according to \cite{IshJam02} and \cite{McaBle06} and references therein, 
there are at least 7 different groups (but the true number is unknown), corresponding to the
number of types of paper used. 
For an in-depth discussion about the appropriate number of groups in Hidalgo stamps data, 
we refer the reader to \cite{BasMcl97}.
Table~\ref{tab:indexes_stamp} also reports prior standard deviations of $K_n$: even if the a-priori differences are small,  the posteriors appear to be quite different among the 4 tests.
All the posterior distributions on $K_n$ support the conjecture of at least seven distinct modes in the data; 
in particular, Figure~\ref{fig:stamp} (b) 
displays the posterior distribution of $K_n$ for Test 4. A modest amount of mass is given to less than 7 groups, and the mode is in 11.
Even Test 1, corresponding to the Dirichlet process case, does not give mass to less than 7 groups, where 9 is the mode. 
Density estimates seem pretty good; an example is given in Figure~\ref{fig:stamp} (a), 
with 90$\%$ credibility band for Test 4.
\begin{figure}
\centering 
\subfigure[]%
{\includegraphics[width=0.49\textwidth,height=0.45\textwidth]{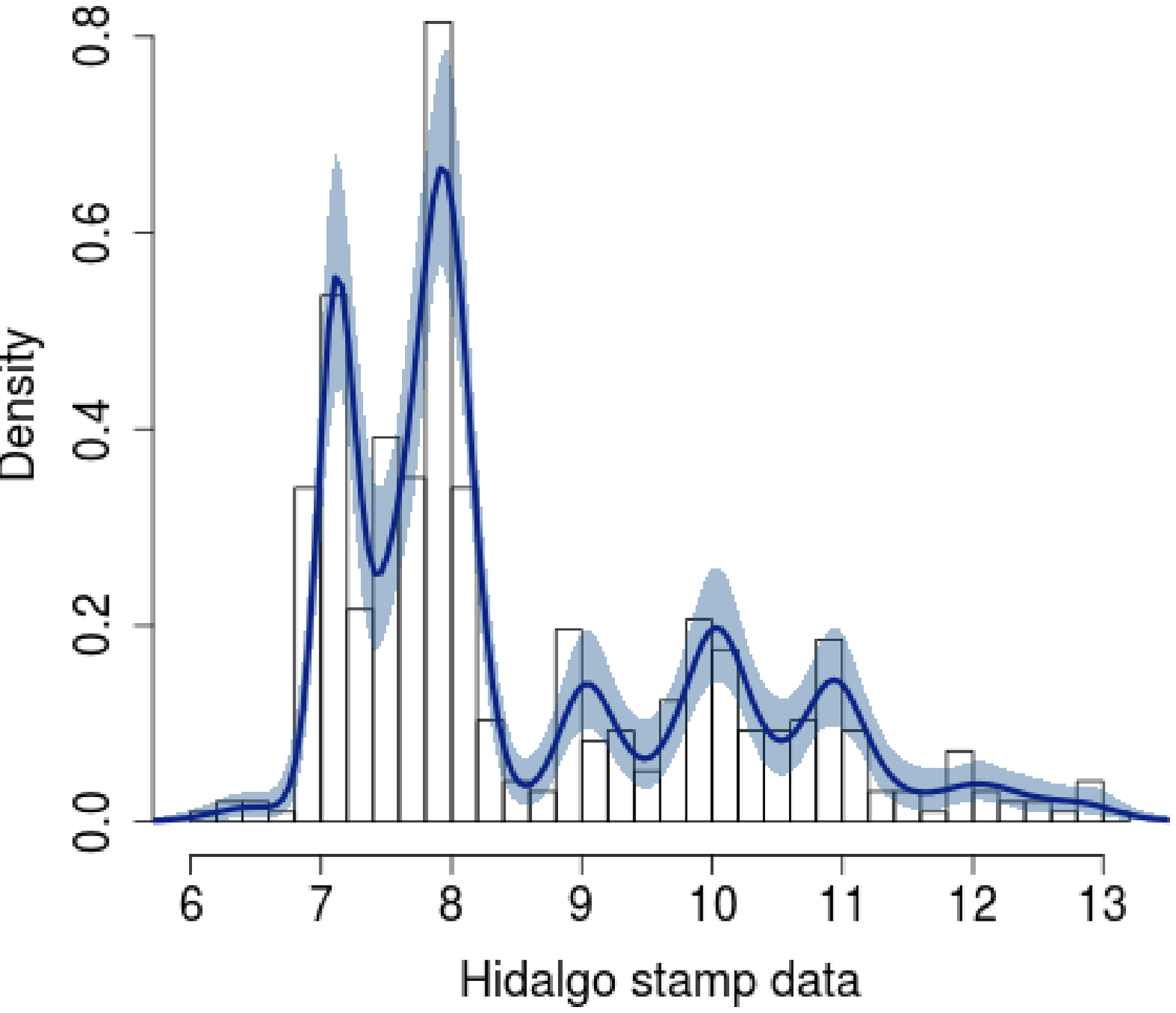}}
\subfigure[]%
{\includegraphics[width=0.49\textwidth,height=0.45\textwidth]{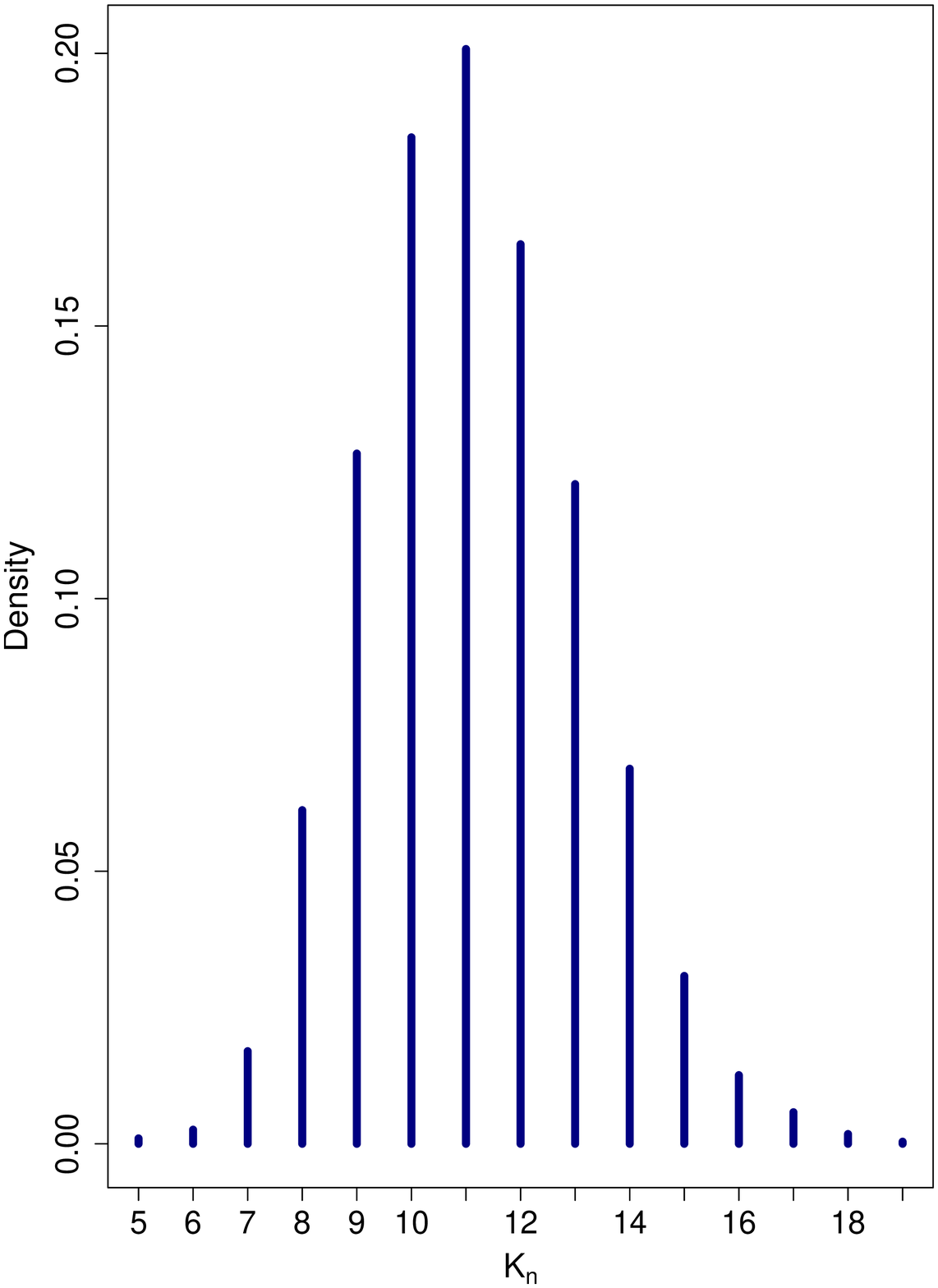}} 
\caption{Posterior inference  for  the Hidalgo stamp data for Test 4: histogram of the data,
density estimate and 90$\%$ pointwise credibility intervals (a); 
posterior distribution of  $K_n$ (b).} 
\label{fig:stamp}
\end{figure}

As in the simulated data example, some predictive goodness-of-fit indexes are reported in Table~\ref{tab:indexes_stamp}: 
the optimal value for each index is indicated in bold.
The SSE is significantly lower when $\omega$ is small, thus suggesting a greater flexibility of the model with small values of $\omega$.
The other indexes assume the optimal value in Test 4 as well, even if those values are similar along the tests.

\begin{table}\centering
 \caption{Predictive goodness-of-fit indexes for the Hidalgo stamps data.}
   \label{tab:indexes_stamp} 
\begin{tabular}{l | cc | cc | ccccc }
  \noalign{\smallskip}\footnotesize
Test & $\omega$ & $\kappa$ & $\mathbb{E}(K_n)$ & $sd(K_n)$ &SSE & SSAE  & WAIC1 & WAIC2 & LPML  \\
   \noalign{\smallskip}\hline\noalign{\smallskip}
1 & 1000 & 0.98  & 7 & 2.04 & 15.17& 384.1 & -713.12 &-713.96 &-714.12     \\ 
2 & 10 &  0.91 &7 & 2.13 &   12.85 & 383.51 & -713.22 &-714.04 &-714.25  \\ 
3 & 5 & 0.92 & 7 & 2.18 &  13.52 & 383.68  &-713.52	 &-714.3 &-714.4 \\ 
4 & 1.05 &  1.02 &7 & 2.32 & \textbf{11.12} & \textbf{383.38}  &\textbf{-712.84} & \textbf{-713.66} &\textbf{-714.05} \\ 
\noalign{\smallskip}\hline
\end{tabular}
\end{table}

\section{Linear dependent NGG mixtures: an application to sports data}
\label{sec:LDNGG}

Let us consider a regression problem, where the response $Y$ is univariate and continuous, for ease of notation. We model the relationship (in distributional terms) between the vector of covariates $\bm
x=(x_1,\dots,x_p)$ and the response $Y$ through a mixture density, where the mixing measure is a collection 
$\left\{ P_{\bm x}, \textbf{x}\in\mathcal{X} \right\}$
of  $\varepsilon$-NormCRMs, being  $\mathcal X$ the space
of all possible covariates. 
We follow the same approach as  in \cite{maceachern1999},  \cite{maceachern2000},
\cite{DeIorio_etal09} for the dependent Dirichlet process.
We define the \emph{dependent $\varepsilon$-NormCRM
process}
$\left\{ P_{\bm x}, {\bm x}\in\mathcal{X} \right\}$, conditionally to $\bm x$, as:
\begin{equation}
  P_{\bm x}\stackrel{d}{=}
  \sum_{j = 0}^{\Neps}P_j\delta_{\bm \gamma_{j}(\bm x) }.
  \label{ed:dep_ngg}
\end{equation}
The  weights $P_j$ are the normalized jumps 
as in \eqref{eq:Pvarepsilon}, while the locations $\gamma_{j}(\bm x)$, $j=1,2,\ldots$, 
are independent stochastic processes with
index set $\calX$ and $P_{0 \bm x}$ marginal distributions. 
Model \eqref{ed:dep_ngg} is such that, marginally, $P_{\bm x}$ follows a $\varepsilon$-NormCRM process, with parameter $(\rho, \kappa P_{0 \bm x})$, where $\rho$ is the intensity of a Poisson process on $\Rea^+$, $\kappa>0$, and $ P_{0 \bm x}$ is a probability on $\Rea$. Observe that, since $\Neps$ and $P_j$ do not depend on $\bm x$, \eqref{ed:dep_ngg} is a generalization of the single weights dependent Dirichlet process \citep[see][for this terminology]{barrientos2012support}. We also assume the functions  ${\bm x}\mapsto\gamma_{j}({\bm x})$ to be continuous.

The dependent $\varepsilon$-NormCRM process in \eqref{ed:dep_ngg}
takes into account the vector of covariates $\bm x$ only through $\gamma_{j}(\bm x)$.  In particular,
when the kernel of the mixture \eqref{eq:modello} belongs to the exponential family,  
for each $j$, $\gamma_j(\bm x)=\gamma(\bm x;\boldsymbol{\tau}_j)$ can be assumed as the link function of a generalized linear model, so that  \eqref{eq:modello} specializes to
\begin{equation}
  \label{eq:hierarc_regression}
\begin{split}
 Y_i| \bm \theta_i,\bm x_i & \ind f(\bm y;\bm \gamma(\bm
x_i,\bm{\theta}_i))\ \ \ i=1,\dots,n\\
  \bm \theta_i|\Peps  &\iid \Peps \ \ \ i=1,\dots,n  \qquad 
\textrm{ where }\Peps\sim \varepsilon-\text{NormCRM}(\rho, \kappa P_0). 
\end{split}
\end{equation} 
This last formulation is convenient because it facilitates parameters interpretation as well as 
numerical posterior computation.

We analyze the Australian Institute of Sport (AIS) data set \citep{CookWeis1994}, which consists of 11 physical measurements on 
202 athletes (100 females and 102 males). Here the response is the lean body mass (lbm), while
three covariates are considered,  the red cell count (rcc),  the height in cm (Ht) and the weight in Kg (Wt).  
The data set is contained in the R package \texttt{DPpackage} \citep{jara2011dppackage}.
The actual model \eqref{eq:hierarc_regression} we consider here is when $f(\cdot;\mu,\eta^2)$ is the Gaussian distribution with $\mu$ mean and $\eta^2$ variance; 
moreover, $\mu=\gamma({\bm x}, \bm\theta)= {\bm x}^t \bm\theta$, and the mixing measure $\Peps$ is the $\varepsilon$-NGG$(\kappa, \sigma,P_0)$, as introduced in \cite{peps}. We have considered two cases, when mixing the variance $\eta^2$ with respect to the NGG process
or when the variance   $\eta^2$  is given a parametric density; in both cases, by linearity of the mean ${\bm x}^t \bm\theta$, the model (here called linear dependent NGG mixture) can be interpreted as a NGG process mixture model, and inference can be achieved via an algorithm similar to that in Section~\ref{sec:epsHNRMImixtures}.
We  set $\varepsilon=10^{-6}$, $\sigma\in\{ 0.001,0.125,0.25\}$, and
$\kappa$ such that $\E(K_n)\simeq5$ or 10. When the variance $\eta^2$ is included in the location points of the $\varepsilon$-NGG process, then $P_0$ is $\calN_4({\bm b}_0,\Sigma_0)\times$inv-gamma$(\nu_0/2, \nu_0\eta_0^2/2)$;
on the other hand, when  $\eta^2$  is given a parametric density, then $\eta^2\sim$inv-gamma$(\nu_0/2, \nu_0\eta_0^2/2)$. 
We fixed hyperparameters 
in agreement with the 
least squares estimate: ${\bm b}_0=(-50,5,0,0)$, $\Sigma_0=diag(100,10,10,10)$, 
$\nu_0=4$, $\eta_0^2=1$.  For all the experiments, we computed the
posterior of the number of groups, the predictive densities at different
values of the covariate vectors  and the cluster
estimate via posterior maximization of Binder's loss function \citep[see][]{Lau_Green07}. 
Moreover, we compared the
different prior settings computing predictive goodness-of-fit tools, specifically log pseudo-marginal likelihood (LPML) and the 
sum of squared errors (SSE), as introduced in Section~\ref{sec:Bessel_application}.
The minimum value of SSE, among our experiments, was achieved when $\eta^2$ is included in the location of the $\varepsilon$-NGG process,  $\sigma=0.001$ and 
$\kappa=0.8$ so that $\E(K_n)\simeq5$. On the other hand, the  optimal LPML
was achieved when $\sigma=0.125$, $\kappa=0.4$, and  $\E(K_n)\simeq5$.
\begin{figure}[h!]
\centering 
\subfigure[]%
{\includegraphics[width=0.49\textwidth,height=0.5\textwidth]{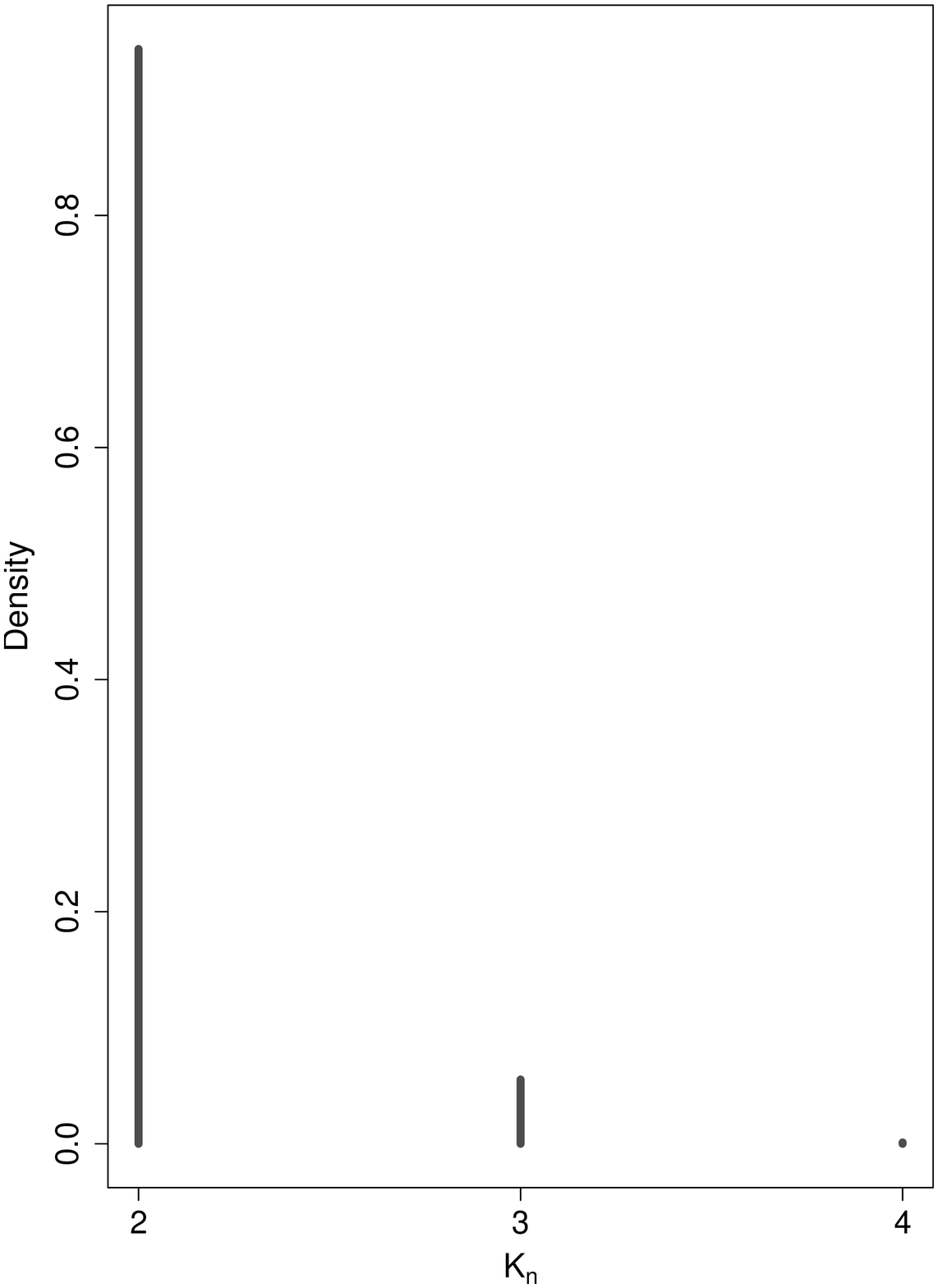}}
\subfigure[]%
{\includegraphics[width=0.49\textwidth,height=0.5\textwidth]{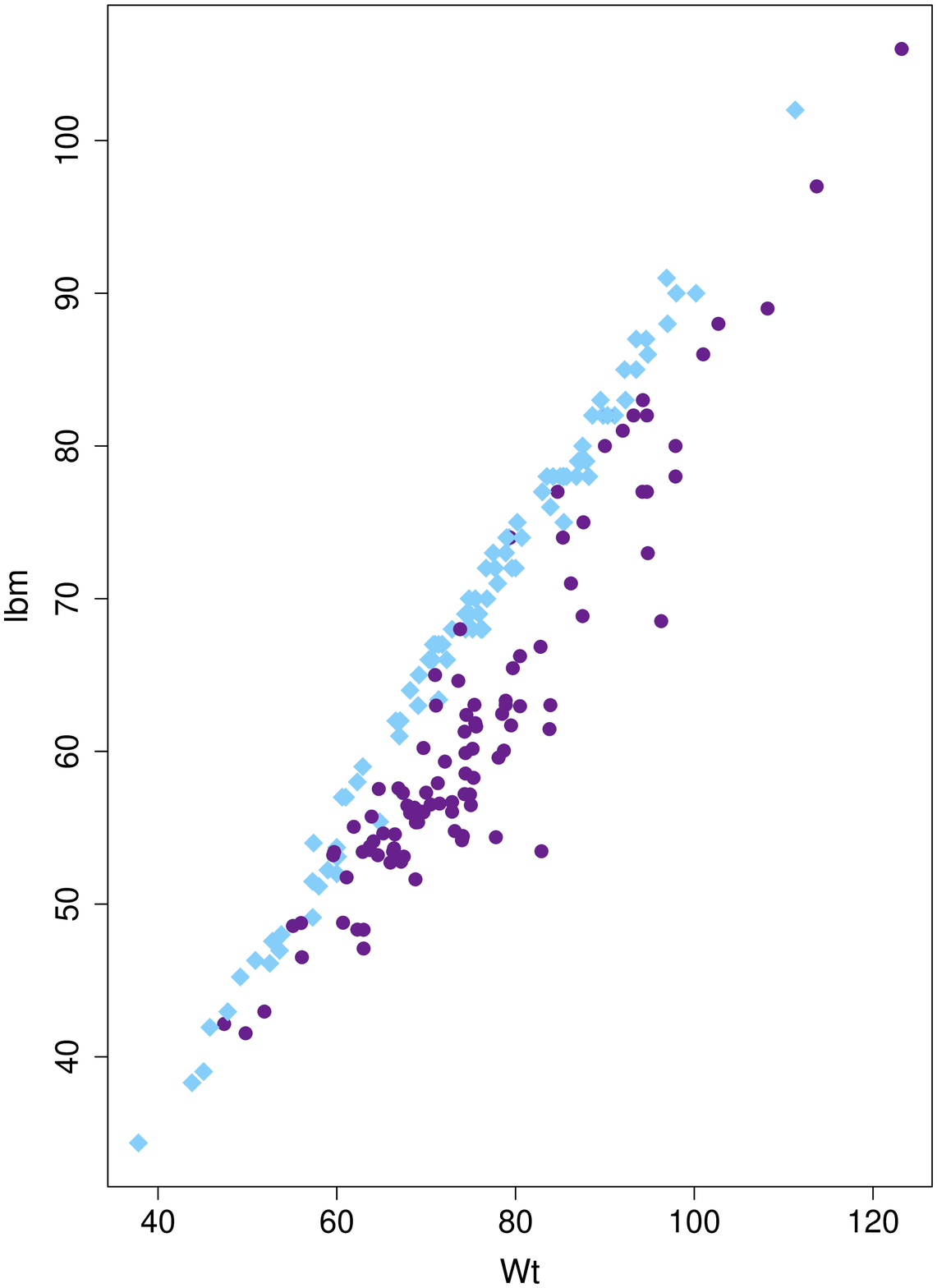}} 
\caption{Posterior distributions of the number $K_n$ of groups (a) and cluster estimate  (b) under the linear dependent $\varepsilon-$NGG mixture.} 
\label{fig:postKn_clusters}
\end{figure}
Posterior of $K_n$ and cluster estimate under this last hyperparameter setting
are in Figure~\ref{fig:postKn_clusters} ($(a)$ and ($b$), respectively);
in particular the cluster estimate is displayed in the scatterplot of the
Wt vs lbm. In spite of the vague prior, the posterior of $K_n$ is  almost
degenerate on $2$, giving evidence to the existence of two linear relationships between lbm and Wt. 

Finally, Figure~\ref{fig:predictives} displays  predictive densities and 95\% credibility bands for 3 athletes, a female 
(Wt=60, rcc=3.9, Ht=176 and lbm=53.71), 
and two males (Wt=67.1,113.7, rcc=5.34,5.17, Ht=178.6, 209.4 and lbm=62,97, respectively); 
the dashed lines are observed values of the response. 
Depending on the value of the covariate, the distribution shows one or two peaks: 
this reflects the dependence of the grouping of the data on the value of $\mathbf{x}$.	
\begin{figure}[h!]
\centering 
\subfigure[]%
{\includegraphics[width=0.32\textwidth,height=0.43\textwidth]{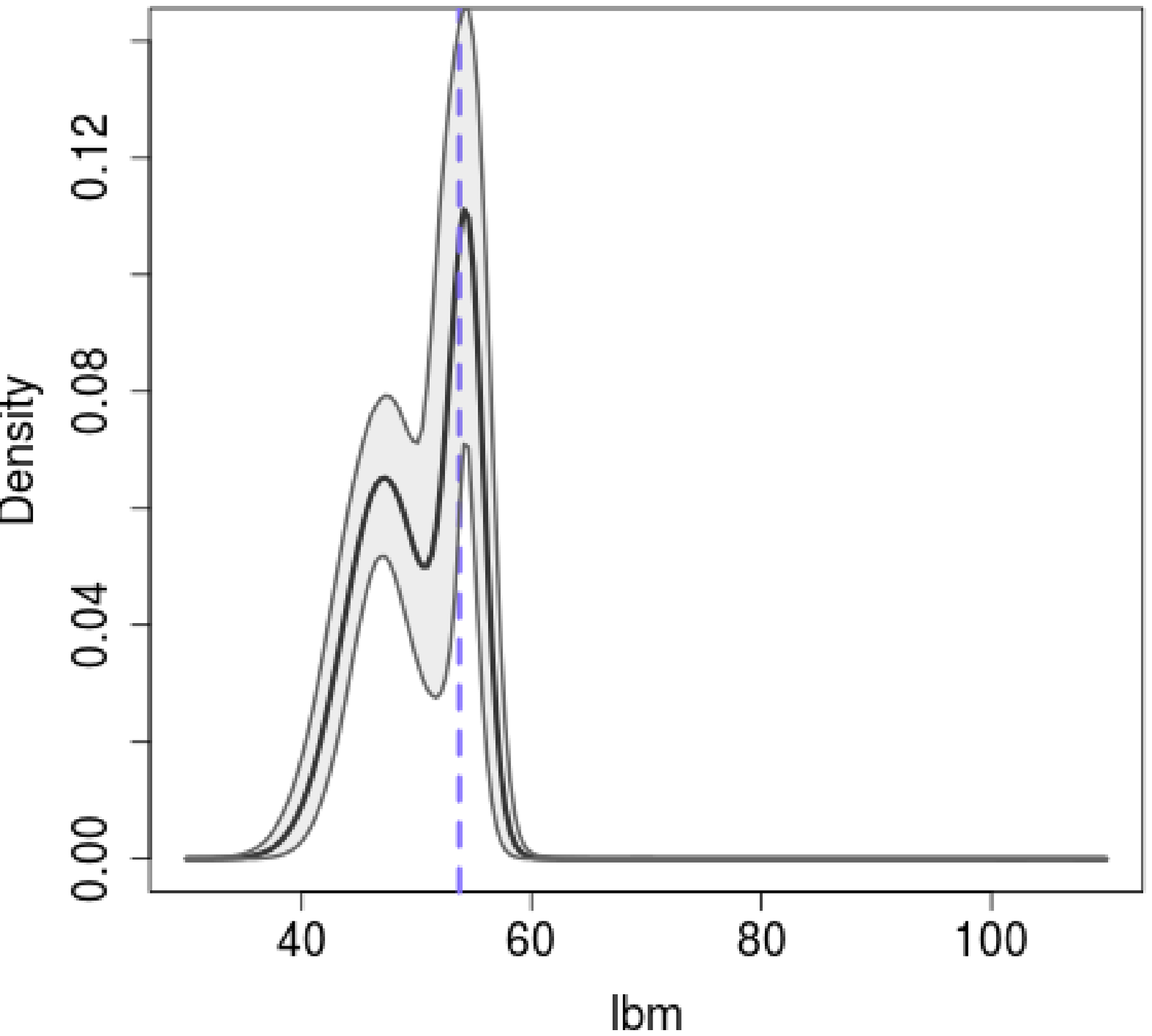}}
\subfigure[]%
{\includegraphics[width=0.32\textwidth,height=0.43\textwidth]{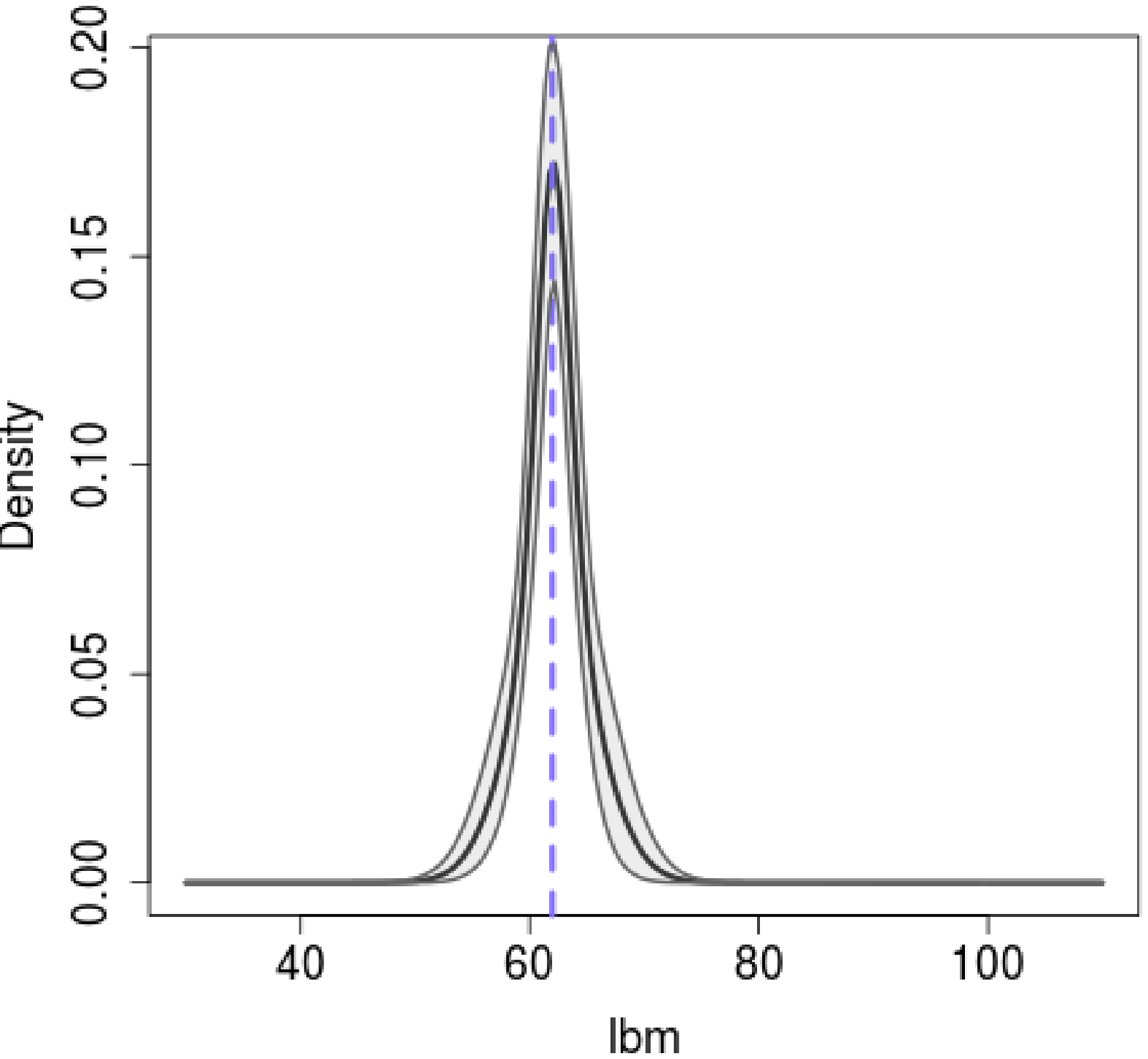}} 
\subfigure[]%
{\includegraphics[width=0.32\textwidth,height=0.43\textwidth]{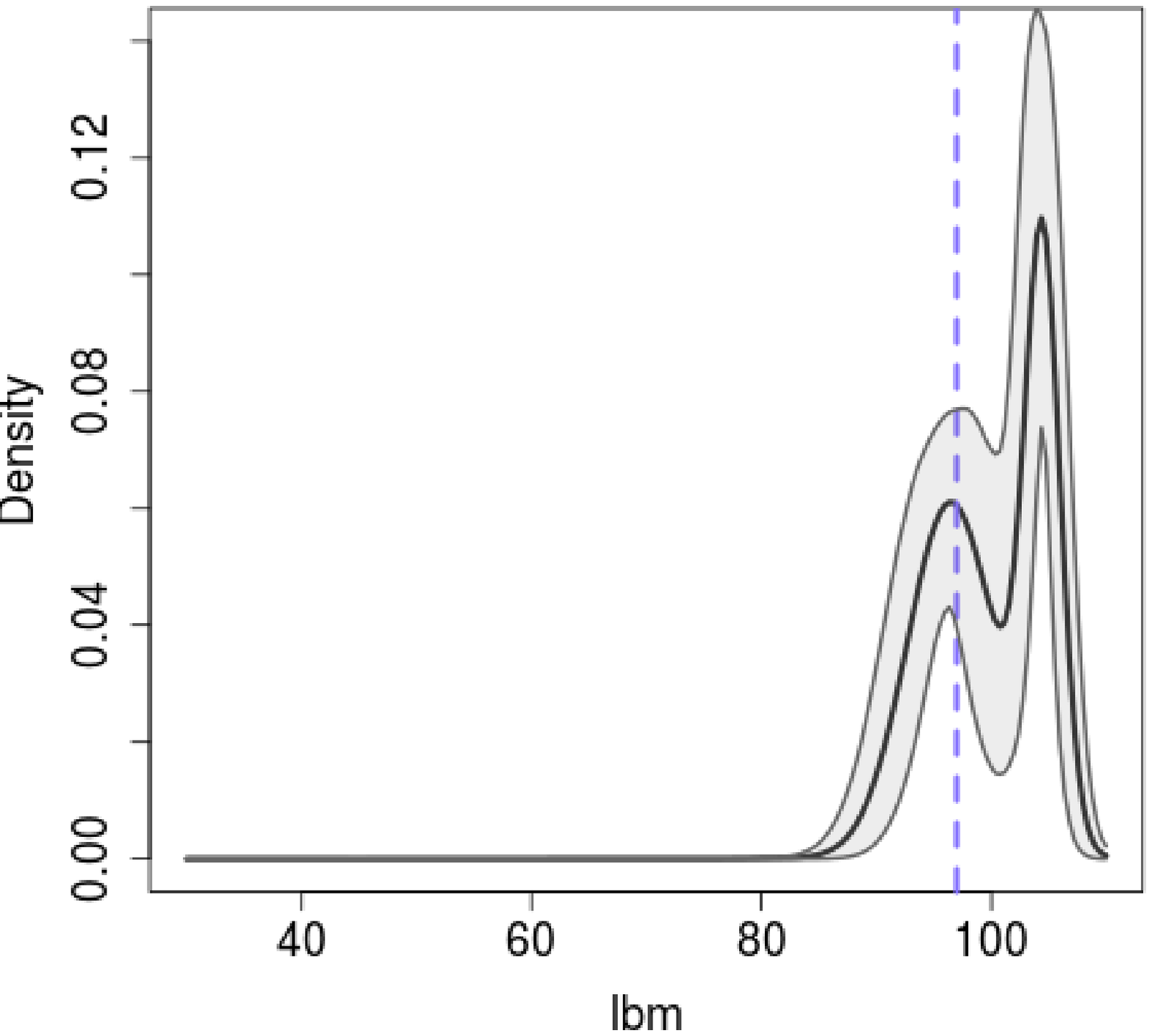}} 
\caption{Predictive distributions of lbm for three different athletes: Wt=60, rcc=3.9, Ht=176 (a),
Wt=67.1, rcc=5.34, Ht=178.6  (b),  Wt=113.7, rcc=5.17, Ht=209.4  (c). The shaded area is the predictive 95\% pointwise credible interval, while the dashed vertical line denotes the observed value of the response.} 
\label{fig:predictives}
\end{figure}
This figure highlights the versatility of nonparametric priors in a linear regression setting with respect to 
the customary parametric priors: indeed, the model is able to capture in detail the behavior of the data, even when several clusters are present.

\section{Discussion}
\label{sec:last}
We have proposed a new model for density and cluster estimation in the Bayesian nonparametric framework.
In particular,  a finite dimensional process, the  $\varepsilon$-NormCRM, has been defined, which  
converges in distribution to the corresponding normalized completely random measure, when $\varepsilon$ tends to 0. Here, 
the $\varepsilon$-NormCRM is the mixing measure in a mixture model. In this paper we have fixed $\varepsilon$ 
very small, but we could choose a prior for $\varepsilon$ and include this parameter into the Gibbs sampler scheme.
Among the achievements of the work, we have generalized all the theoretical results obtained in the special case 
of NGG in \cite{peps}, including the expression of the eppf for an $\varepsilon$-NormCRM process, 
its convergence to the corresponding eppf of the nonparametric underlying process and the posterior characterization of $P_\varepsilon$. 
Moreover, we have provided a general Gibbs Sampler scheme to sample from the posterior of the mixture model.
To show the performance of our algorithm and the flexibility of the model, we
have illustrated two examples via normalized completely random measure mixtures:
in the first application, we have introduced a new normalized completely random measure, named normalized Bessel random measure; we have studied its theoretical properties 
and used it as the mixing measure in a model to fit simulated and real datasets.
The second example we have dealt with is a linear dependent $\varepsilon$-NGG mixture, where the dependence lies on the support 
points of the mixing random probability, to fit a well known dataset.
Current and future research is devoted on the use of our approximation on more complex dependence structures. 


\newpage
\begin{center}\textbf{
  APPENDIX: DETAILS ON FULL-CONDITIONALS FOR THE GIBBS SAMPLER }
\end{center}

Here, we provide some details about Step 3 of the Gibbs Sampler in Section~\ref{sec:epsHNRMImixtures}.
As far as Step 3a is concerned, the full-conditional $\mathcal{L}(\varepsilon | u, \boldsymbol\theta)$ is obtained 
integrating out $N_\varepsilon$ (or equivalently $N_{na}$) from the law $ \mathcal{L}(N_\varepsilon, u,\bm\theta)$, as follows:
\begin{equation*}
 \begin{split}
\mathcal{L}(\varepsilon| u, \boldsymbol{\theta}, \boldsymbol{Y}) &\propto \sum_{N_{na}=0}^{+\infty}
  \mathcal{L}(N_{na},\varepsilon, u, \boldsymbol{\theta}, \boldsymbol{Y})  \\
  & = \sum_{N_{na}=0}^{+\infty} \pi(\varepsilon)e^{-\Lambda_\varepsilon}\dfrac{\Lambda_{\varepsilon,u}^{N_{na}}}{\Lambda_\varepsilon}
  \dfrac{(N_{na}+k)}{N_{na}!}\prod_{i=1}^k 
  \int_\varepsilon^{+\infty}\kappa s^{n_i}\e^{-u s}\rho(s)ds\\
&= \left( \prod_{i=1}^k\int_\varepsilon^{+\infty}\kappa s^{n_i} \e^{-u s}\rho(s)ds\right) e^{\Lambda_{\varepsilon,u}-\Lambda_\varepsilon}
\dfrac{\Lambda_{\varepsilon,u}+k}{\Lambda_{\varepsilon}}\pi(\varepsilon) = 
f_\varepsilon(u; n_1, \dots, n_k)\pi(\varepsilon),
 \end{split}
\end{equation*}
where we used the identity $\sum_{N_{na}=0}^{+\infty}\Lambda_{\varepsilon,u}^{N_{na}}
  (N_{na}+k)/(N_{na}!)$ $=e^{\Lambda_{\varepsilon,u}}(\Lambda_{\varepsilon,u}+k)$, as previously noted.
 Moreover, $f_\varepsilon(u; n_1, \dots, n_k)$ is defined in \eqref{eq:integr_eppf}. This step depends explicitly on the expression of $\rho(s)$.
  
Step 3.b consists in sampling from $\mathcal{L}(P_\varepsilon|\varepsilon, u, \boldsymbol\theta)$ and has already been described 
in the proof of Propositùion~\ref{prop:Posterior}. However, for a complete outline of the algorithm, we list the full-conditionals resulting into Step 3b:
 \begin{enumerate}
  \item [(i).] $\mathcal{L}(N_{na}|\varepsilon, \bm Y, u, \bm \theta) = \dfrac{\Lambda_{\varepsilon u}}{\Lambda_{\varepsilon u}+k}
  \mathcal{P}_1(\Lambda_{\varepsilon u})+\dfrac{k}{\Lambda_{\varepsilon u}+k}\mathcal{P}_0(\Lambda_{\varepsilon u})$; this is formula \eqref{eq:fullcond_Nna}.
  \item [(ii).] Non-allocated jumps: iid from 
  $\mathcal{L}(J_j) \propto e^{-uJ_j}\rho(J_j)\mathbbm{1}_{(\varepsilon, \infty)}(J_j),$ $j =1,\dots,N_{na}$; 
  see the second factor of the last expression in \eqref{eq:postAugm}.
  \item [(iii).] Allocated jumps: iid from $\mathcal{L}(J_{l^*_i}) \propto J_{l^*_i}^{n_i}e^{-uJ_{l^*_i}}\rho(J_{l^*_i})
  \mathbbm{1}_{(\varepsilon, \infty)}(J_{l^*_i})$, $i=1, \dots, k$;  
  see the first factor of the last expression in \eqref{eq:postAugm}.
  \item [(iv).] Non-allocated points of support: iid from $P_0$; see \eqref{eq:leggecongiuntamodelloAPP}.
  \item [(v).] Allocated points of support: iid from  $\mathcal{L}(\tau^*_i) \propto \{ \prod_{j \in C_i} k(X_j;\tau_i) \} P_0(\tau_i)$, 
  $i = 1, \dots, k$; see \eqref{eq:leggecongiuntamodelloAPP}.
 \end{enumerate}

 

\end{document}